\documentclass[12pt]{article}

\usepackage{amssymb}
\usepackage{amsmath}
\usepackage{amsthm}
\usepackage[T1]{fontenc}
\usepackage[utf8]{inputenc}
\usepackage[english]{babel}
\usepackage[section]{algorithm}
\usepackage{algorithmic}
\usepackage{graphicx} 
\usepackage[normalsize]{subfigure}
\usepackage{caption}
\usepackage{url}
\captionsetup[figure]{font=normalsize,labelfont=normalsize}
\usepackage[normalsize]{subfigure} 
\usepackage{tikz-cd}


\newtheorem{theorem}{Theorem}[section]
\newtheorem{definition}[theorem]{Definition}

\newtheorem{lemma}[theorem]{Lemma}

\newtheorem{example}[theorem]{Example}

\usepackage{geometry}
\geometry{margin=1in}
\usepackage{float}

\begin{document}

	\title{Reduction of Simplicial Complex by Relation and Dowker Complex}
	\author{Dominic Desjardins Côté}
	
	\maketitle
	\bibliographystyle{plain}	
	\begin{abstract}
		We show a new reduction method on a simplicial complex. This reduction works well with relations and Dowker complexes. The idea is to add a dummy vertex $ z $ to the simplicial complex $K$. We add the simplicial cone $ z * L $ to $K$  where $ L$ is the union of stars from a set of vertices. If $ L $ is contractible, then we can apply the Gluing theorem to glue $ z * L $ to $K$ to obtain $K'$. Finally, we strong collapse each vertex of $L$ in $K'$ to obtain $K''$. If the conditions are satisfied, then $K$, $K'$ and $K''$ are homotopically equivalent.
		
		 This trick can be adapted to relation with the associated Dowker complex $K_R$. This notation help to simplify various computations. Relations are simple data structures, and they are represented by binary matrices. This method of reduction with relation is versatile and it can be used on different structures such as simplicial complexes, convex polytopal complexes and covers of topological spaces that satisfy the Nerve Theorem. We develop an algorithm based on the reduction step. Let $n$ be the number of vertices of $K$.
		 
		  We have $ O(n^2) $ subcomplexes $ L$ to verify contractibility. This verification of $ L $ is costly with $ O(d \epsilon (n^2 + m^2)) $ where $d$ is the dimension of $L$, $m$ the number of toplexes in $L$, $n$ the number of vertices in $L$ and $ \epsilon $ the maximal number of toplexes adjacent to a vertex in $L$. But, $L$ is often a small simplicial complex. If $L$ is contractible, then we apply a clean-up method on some columns that takes $ O(d m^2) $. Finally, we show the efficiency of the reduction algorithm on several experimental results.
	\end{abstract}		
	
	\setlength{\parskip}{1em}	
	
	\section{Introduction}
		
		An important aspect in topology is to compute the homology of a topological space $X$. These homological features give a nice description of our topological spaces. In computational topological, we discretize our space into a simplicial complex $S$ which have the same homology of $X$.
		
		To compute homology of $ S $ in a classic way, one uses the Smith normal form which can be long to compute. A way to make it faster is to reduce the number of simplices with methods that are homological invariants. There exists multiple methods to reduce the number of simplices. This list of references is not exhaustive but it covers various approaches \cite{arDisMorTheAlgo, strongColPers, acySubspaceSimp, edgeContractionSimpHomo}. In this paper, we show a novel new way to reduce a simplicial complex $S$. 
		
		The reduction method has two steps. First, we add a dummy vertex $ z $. We choose $ n $ vertices and compute the sub complex $ L = \cup_{i=1}^n \overline{St(x_i)} $. If $ L $ is contractile, then by the Gluing Theorem, we can attach $ z*L $ to $ K $ to obtain $ K' $ where $ *$ is the simplicial joint. For the second step, each vertex of $L$ is now dominated by $z$ in $K'$. Therefore, we can strong collapse each vertex of $L$ to obtain $K''$. If the conditions are satisfied, then we obtain that $ K $, $ K' $ and $ K'' $ are homotopically equivalent. If we choose $ n > 1 $, then this reduction method removes $n$ vertices which are greater than the single vertex we added. Furthermore, it also removes some simplices. This happens in two cases. First, we have that two simplices can become the same one after the reduction. Second, some simplices have their dimension reduced and can become the face of another simplex.
		
		\begin{figure}
  			\center
  			\subfigure[ The simplicial complex $K$. ]{
   				\includegraphics[height=5cm, width=5cm, scale=1.00, angle=0 ]{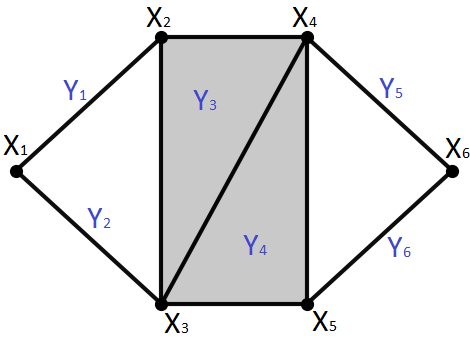}\label{figIntroCmpa}
  			}
  			\,
  			\subfigure[ The simplicial complex $ L  $. This is contractible.  ]{
   				\includegraphics[height=5cm, width=5cm, scale=1.00, angle=0 ]{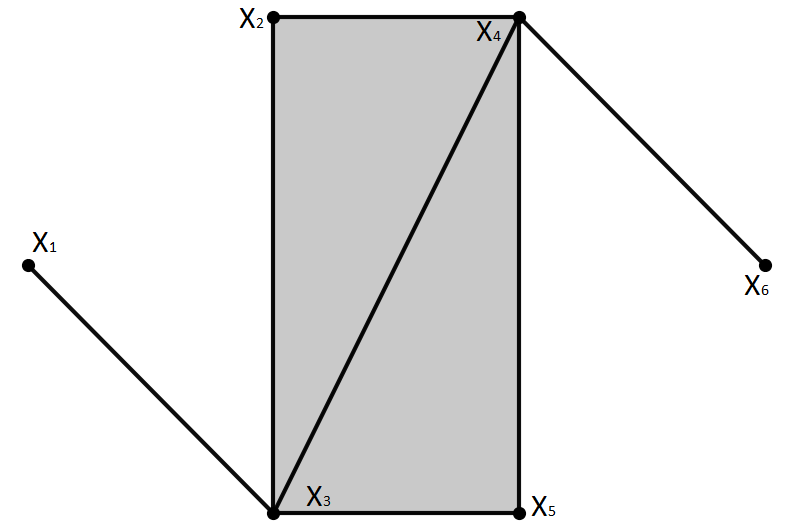}
            }
            \,
  			\subfigure[ This is the result after a reduction step.]{
   				\includegraphics[height=5cm, width=5cm, scale=1.00, angle=0 ]{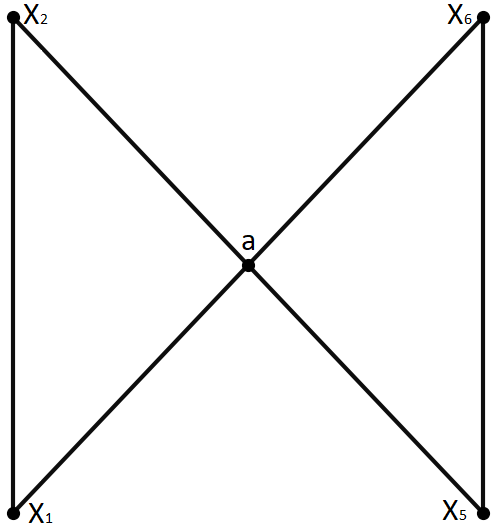}
            }
            \caption{This is an example of the reduction step on simplicial $K$ with $ L = \overline{St(x_3)} \cup \overline{St(x_4)}$. Between the Figure (a) and the Figure (c), we glue $ a * L $ to $K$ and we strong collapse $ x_3 $ and $ x_4 $. }\label{figIntro}
		\end{figure}
		
		For the computation, we argue that relation and Dowker complexes are a well-suited data structure for our reduction method. 
		 
		Let us remind some definitions. From a relation $R \subset X \times Y$, we can define two simplicial complexes $K_R $ and $ L_R $. Let $ \sigma = [x_1, x_2, \ldots, x_n] \in K_R $ if and only if there exists an $y$ such that $ (x_i, y) \in R $ for all $i$. By analogous construction, $ \tau = [ y_1, y_2, \ldots, y_n ] \in L_R $ if and only if there exists an $ x $ such that $ (x, y_j) \in R $ for all $ j$ . They are called Dowker complexes. In litterature, they are also called Witness complexes \cite{rGhristEAT}. From the Dowker Theorem \cite{bjorner1995}, $ \vert K_R \vert $ and $ \vert L_R \vert $ are homotopy equivalent. The Dowker Theorem has multiple applications and have great success in the theory of formal concept \cite{boFormalConcept} and Q-analysis in social studies \cite{arQanalAtkin, arQanalAppSoc}.
			
		An advantage of a relation is $ R $ can be represented by a binary matrix. From a simplicial complex $ S $, we can easily construct a relation such as $ S = K_R $. Let $ X $ be the set of vertices and $ Y $ be the set of toplexes of $S$. We define the relation $ R \subset X \times Y $ such that $ (x, y) \in R $ if and only if $ x \in y $.
		
		The operations that we need to do are quite easy except one. To compute $ L = \cup_{i = 1}^n \overline{St(x_i}) $, we need to compute the following submatrix : $R_{St} \subset X \times (\cup_{i=1}^n R(x_i)) $ such that $ R_{St}(x) = R(x) \cap (\cup_{i=1}^n R(x_i)) $. For gluing $ z *L $, we add a new row $ z $ to $R$ where $ R(z) = \cup_{i=1}^n R(x_i) $. For strong collapse on $ x_i $, we only need to remove the row associated to $ x_i $. Finally, the hardest operation is to verify that $ L $ is homotopically trivial. For our problem, we use the algorithm of \cite{strongColPers}. Their methods use strong collapse on binary matrix. If we obtained a $ 1 \times 1$ matrix from the algorithm of \cite{strongColPers}, then $L$ is strong collapsible which implies that is also contractible. We choose this algorithm because it can be done on binary matrices without computing Dowker complexes.
		
		We developed an algorithm from our reduction method with two vertices. We have $ O(n^2) $ different $L$ to test where $ n $ is number of vertices in $K$. To verify $L$ that is contractible $ O(d \epsilon(n^2+m^2)) $ where $ d $ is the dimension of $L$, $ n $ the number of vertices in $L$, $ m $ the number of toplexes in $L$ and $ \epsilon $ the maximal number of toplexes containing an $x \in X$ in $L$. Finally, we apply a clean-up method on columns to reduce the memory. The cost of this is  $ O(d m^2) $. We suppose that the other operations are done in constant time.
		
		In section \ref{secPrior}, we discuss about related works. In section \ref{secPrelim}, we recall definitions and results about simplicial complex, Dowker complex and the Gluing Theorem. In section \ref{secRedStep}, we show the reduction step. We also define the notation of relation and their associated Dowker complexes. In section \ref{secData}, we construct relations on different topological structures. They are simplicial complex, convex polytopal complex and covering of a topological space. In section \ref{secRedAlgo}, we define the algorithm on our reduction methods and we discuss about time complexity. We go into detail in every step and things to avoid which can slow down the computing time. In section \ref{secExpRes}, we show some experimental results and we discuss about the efficiency of our method.		
		
	\section{Prior Works}\label{secPrior}
	
	There already exists multiple reduction methods for a simplicial complex. We will cover some methods related to us. 
	
	The first similar method is \cite{strongColPers}. First, we use the same data structure which is an adjacency matrix of vertices and toplexes. This is the same relation that we defined earlier. Authors use strong collapse and strong homotopy type \cite{strongHomType}. A row $x_i$ is dominated by a row $x_j$, if $ x_i $ as one at column $k$ then $ x_j $ also as a one at the same column. If this is the case, then we can strong collapse $ x_i $ by removing $x_i$ and all simplices containning $ x_i $. When we have done all possible strong collapses on the rows, we repeat the same process for columns. The resulted simplicial complex has the same strong homotopy type as the initial complex \cite{strongHomType}. We repeat until we cannot reduce the simplicial complex anymore. The method of \cite{strongColPers} is a special case of our method. This happens when $ n = 2 $ and $ \overline{St({x_1})} \subset \overline{St(x_2)} $. From Figure \ref{figIntroCmpa}, $K$ cannot be reduced any more by strong collapse. But with our method, we can still reduce $K$.

	The second method is by edge collapse \cite{edgeContractionSimpHomo, topoPresEdgeCont, boHighDimCollapse, inProcHighDimEffDataStruc}. This is also called edge contraction. Let $ a $ and $b$ be vertices and $ ab $ an edge of $ K $. The edge $ ab $ satisfies the Link condition if $ Lk(ab) = Lk(a) \cap Lk(b) $. We define the following simplicial map $ r : K \to K \setminus \{ a \} $ where $ r(a) = b $ and identity for others vertices. If  $ ab $ satisfies the Link condition, then $ r $ preserves the homotopy type \cite{inProcHighDimEffDataStruc}. The edge collapse method is also a special case from our method. This happens when $ n = 2 $, $ x_i \in \overline{St(x_j)} $ and we set the vertex $ z = x_j $. An advantage of edge collapse is that the Link condition is easier to compute than verifying that $L$ is contractible. Our method can reduce it further. This can be seen from simplicial complexes in Figure \ref{figPrio}.
	
		In \cite{boHighDimCollapse}, they used a similar structure in one of their methods. It is called Generalized Indexed data structure with Adjacencies \cite{arAdjacency}. This structure encodes the adjacency matrix of vertices and toplexes, the boundary relation between toplex and vertices, the adjacency matrix between toplexes, and partial information about the star of each vertex. In our method, we do not need all this information. We only need the adjacency matrix of toplexes and vertices which we represent the data by a relation. In \cite{boHighDimCollapse}, they also developed a Link condition by using only toplexes instead of computing each simplex in $ Lk(a) $, $ Lk(b) $ and $ Lk(ab) $. This reduce the computation of edge collapse for high dimension complex.
	
		\begin{figure}
  			\center
  			\subfigure[ The simplicial complex $K$ ]{
   				\includegraphics[height=4.5cm, width=6cm, scale=1.00, angle=0 ]{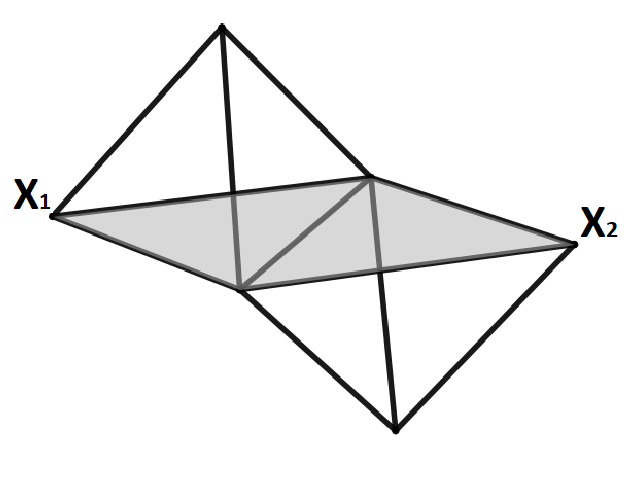}\label{figPriorCmpa}
  			}
  			\,
  			\subfigure[ The reduced simplicial complex $ K'' $  ]{
   				\includegraphics[height=4.5cm, width=6cm, scale=1.00, angle=0 ]{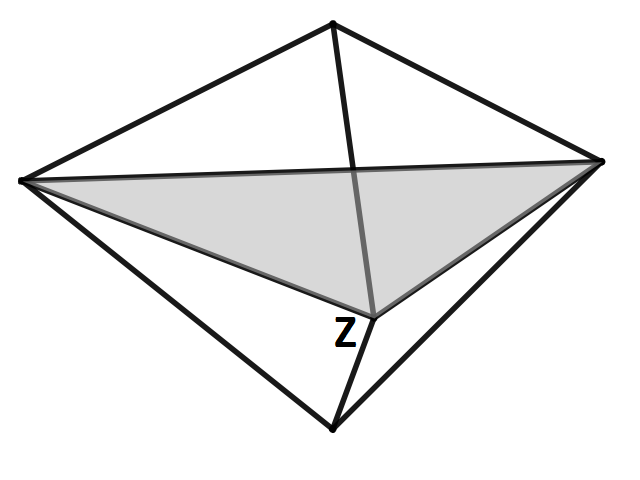}\label{figPriorCmpb}
            }
            \caption{This is an example of the reduction step on simplicial $K$ where we can't apply any edge collapse. Every edge in Figure \ref{figPriorCmpa} does not satisfy the Link condition. But we can still apply our reduction with $ L = \overline{St(x_1)}  \cup \overline{St(x_2)} $ which is contractible. We obtain the reduced simplicial complex in Figure \ref{figPriorCmpb}.}\label{figPrio}
		\end{figure}
	
	Another approach is to reduce by acyclic subspace \cite{acySubspaceCub, acySubspaceSimp}. The method comes from the following observation. There is a Theorem \cite{acySubspaceSimp} which says that if $ A $ is an acyclic subspace of a space $ X $, then $ H_n(X) \cong H_n(X,A) $ for $ n \geq 1 $ and $ H_0(X) \cong \mathbb{Z} \bigoplus H_0(X, A) $. The idea is we suppose that computing the relative homology of $ H_n(X,A) $ is easier than $ H_n(X) $. The effectiveness of this method is by choosing a large set $ A $. We could also repeat the process with another acyclic subspace $ B $ to compute $ H_n(X, A) $\cite{acySubspaceCub}. This simplify the computation of $ H_n(X,A) $. We have a similar problem as \cite{acySubspaceSimp}. We need to verify that a set of simplices is contractible. Authors in \cite{acySubspaceSimp} need an acyclic set and they give different methods : computing the homology directly, a full test and a partial test. We choose a partial test by testing the strong collapsibility of $L$ instead of verifying that $L$ is contractible.
	
	The construction $ z * L $ is also called a simplicial cone. In \cite{thesisConingCol}, different types of cone are constructed on a simplicial complex. The author in \cite{thesisConingCol} used anti-collapse to create a cone on the simplicial complex and  collapse the vertices to reduce the complex. In our method, we use the Gluing Theorem to add a cone and we use strong collapse to remove the vertices which is faster than finding a sequence of collapses and anti-collapses. Similar cone method like ours can be found in \cite{inProcTopPersSimpMap}. But it is used for different purposes.	  	
		
	\section{Preliminaries}\label{secPrelim}	
	
	\subsection{Simplicial Complex}	
	
	An abstract simplicial complex is a set $ K $ that contains finite non-empty sets such as if $  A \in K$, then all subsets of $A$ are also in $K$. Subsets of $ K $ are called simplices and elements of simplices are called vertices. We denote an $n$-simplex by $ [ x_0, x_1, \ldots, x_n ] $ spanned by the set of vertices $ \{ x_0, x_1, \ldots, x_n \} $. All subsets of a simplex are called faces and we denote by $ \sigma \leq \tau $, if $ \sigma $ is a face of $ \tau  $. We say $ L $ is a subcomplex of $ K $ if $ L $ is a simplicial complex and all $  \sigma \in L$ implies that $ \sigma \in K $. We denote $ \vert K \vert $ the geometric realization of an abstract simplicial complex $K$. We define a simplex $ \sigma $ is a toplex if there exists $ \tau $ such that $ \sigma \leq \tau $, then $ \sigma = \tau $. The dimension of a simplex $ \sigma  $ is the number of vertices minus one. The dimension of $K$ is the maximum dimension of its simplices.

	The combinatorial closure of a set of simplices $A$ in $K$ is :
	\begin{equation*}
		\overline{A} := \{ \sigma \in K \mid \exists \tau \in A \text{ such that } \sigma \leq \tau  \}.
	\end{equation*}
	
	The star of a simplex $ \sigma $ is all simplices in $K$ such that $ \sigma $ is a face.	
	\begin{equation*}
		St(\sigma) := \{ \tau \in K \mid \sigma \leq \tau \}.
	\end{equation*}
	
	The star of a simplex is not a subcomplex of $K$. But the closure of a star is a subcomplex of $K$. Let $ A $ be a set of simplices. We define the star of $A$ by $ St(A) := \{ \tau \in K \mid \exists \sigma \in A \text{ such that } \sigma < \tau \} $.	
	
	\begin{definition}
		Let $K$ and $L$ be simplicial complexes. The simplicial joint $ K * L $ is 
		\begin{equation*}
			K*L = K \cup L \cup \{ \sigma \dot{\cup} \tau \mid \sigma \in K, \tau \in L \},
		\end{equation*}				
		where $ \sigma \dot{\cup} \tau $ is the simplex spanned by the vertices of $ \sigma $ and $ \tau $.
	\end{definition}
	
	The simplicial joint of a simplicial complex with a vertex is called a simplicial cone. This is the combinatorial version of the cone of a topological space.	

	Let us define strong collapse \cite{strongHomType}. We say that a vertex $ v_1 $ is dominated by a vertex $v_2$, if there exists a subcomplex $L$ such that $Lk(v_2) $ is the simplicial cone $ v_2 * L $. We can delete the vertex $ v_1 $ from the simplicial complex without changing the homotopy type of $K$. This step of deletion is called strong collapse and we denote $ K \searrow_{SC} K \setminus \{ v_1 \} $. We define a retract map $ r : K \to K \setminus \{ v_1 \} $ such that $ r(v_1) = v_2 $ and the other vertices are the identity. We also have the inclusion $ i : K \setminus \{ v_1 \} \hookrightarrow K $. We have that $ r $ and $ i $ are homotopic. Precisely, $r$ is a strong deformation retract \cite{strongHomType}. If there exists a sequence of strong collapses and strong expansions between $K$ and $K'$, then they have the same strong homotopy type. We say that $K$ is strong collapsible if there exist sequence such that the final complex is a point. In our algorithm, we use strong collapsibility to show that a simplicial complex is contractible.

	\subsection{Relation and Dowker Complex}	

		Let $ X $ and $ Y $ be finite sets and define a relation $ R $ a subset of $ X \times Y $. For a pair $ (x, y) \in R $, we use two other notations $ x R y $ and $ y \in R(x) $. The inverse relation is $ R^{-1} \subset Y \times X $ and defined by $ y R^{-1} x $, if $ x R y $. Starting from a relation, we can define two abstract simplicial complexes called Dowker complexes.
		
		\begin{definition}\label{dfnDowK}
			Let $ R \subset X \times Y $ be a relation and $ K_R $ be an abstract simplicial complex. $ [ x_1, x_2, \ldots, x_n  ] \in K_R $ if and only if $ \exists y \in Y $ such that $ x_i R y $ for all $ i = 1, 2, \ldots, n $. 
		\end{definition}
		
		We have an analogous construction with $ Y $ as the set of vertices.	
		
		\begin{definition}\label{dfnDowL}
			Let $ R \subset X \times Y $ be a relation and $ L_R $ be an abstract simplicial complex. $ [ y_1, y_2, \ldots, y_n ] \in L_R $ if and only if $ \exists x \in X $ such that $ x R y_j $ for all $ j = 1, 2, \ldots, n $.
		\end{definition}
		
		The next theorem links the homotopy of $ \vert K_R \vert $ and $ \vert L_R \vert $.		
		
		\begin{theorem}[Dowker's Theorem]\label{thmDowHom}
			Let $ R \subset X \times Y $ be a relation. $ \vert K_R \vert  $ and $ \vert L_R \vert $ are homotopy equivalent.
		\end{theorem}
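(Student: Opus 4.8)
The plan is to realize $L_R$ as the nerve of a natural closed cover of $K_R$ and then invoke the Nerve Lemma (the simplicial version for covers by subcomplexes, which is available in \cite{bjorner1995}). For each $y \in Y$ with $R^{-1}(y) \neq \emptyset$, write $\sigma_y$ for the simplex spanned by the vertex set $R^{-1}(y) = \{ x \in X \mid x R y \}$. By Definition \ref{dfnDowK}, $\sigma_y \in K_R$ with witness $y$, and so is every subset of $R^{-1}(y)$; hence $K_y := \overline{\{ \sigma_y \}}$ is the full simplex on $R^{-1}(y)$, a subcomplex of $K_R$. First I would check that the family $\{ K_y \}$ covers $K_R$: a simplex $[x_1, \dots, x_n]$ lies in $K_R$ exactly when some $y$ witnesses it, i.e. $\{ x_1, \dots, x_n \} \subseteq R^{-1}(y)$, which says precisely that the simplex lies in $K_y$.

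Next I would verify the hypotheses of the Nerve Lemma for this cover. Each $K_y$ is a full simplex, hence contractible. For intersections, observe that $K_{y_1} \cap \cdots \cap K_{y_k}$ is the full simplex on $R^{-1}(y_1) \cap \cdots \cap R^{-1}(y_k)$, so it is either empty or again a contractible simplex. Thus $\{ K_y \}$ is a good cover, and the Nerve Lemma yields $\vert K_R \vert \simeq \vert \mathcal{N} \vert$, where $\mathcal{N}$ denotes the nerve of the cover.

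Finally I would identify $\mathcal{N}$ with $L_R$ as a combinatorial matter. The vertices of $\mathcal{N}$ are the $y$ with $R^{-1}(y) \neq \emptyset$, which are exactly the vertices of $L_R$, since $[y] \in L_R$ iff some $x$ satisfies $x R y$. A set $\{ y_1, \dots, y_k \}$ spans a simplex of $\mathcal{N}$ iff $K_{y_1} \cap \cdots \cap K_{y_k} \neq \emptyset$, i.e. iff $R^{-1}(y_1) \cap \cdots \cap R^{-1}(y_k) \neq \emptyset$, i.e. iff there is an $x$ with $x R y_j$ for all $j$ --- which is exactly the condition in Definition \ref{dfnDowL} for $[y_1, \dots, y_k] \in L_R$. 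Hence $\mathcal{N} = L_R$, and combining with the previous step gives $\vert K_R \vert \simeq \vert L_R \vert$.

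The step I expect to be the main obstacle is the reliance on the Nerve Lemma in its homotopy form rather than merely its homological form: the whole content of the argument is concentrated there, while the remaining steps are direct identifications. Since $X$ and $Y$ are finite, the complexes involved are finite, and the simplicial Nerve Lemma for covers by subcomplexes with contractible nonempty intersections delivers an honest homotopy equivalence. An alternative that sidesteps the Nerve Lemma is the classical acyclic-carrier route: one builds explicit chain maps between $C_*(K_R)$ and $C_*(L_R)$ carried by the acyclic carrier $x \mapsto \overline{\{\sigma_y\}}$-type assignments and applies the Acyclic Carrier Theorem, which yields the homology isomorphism directly; promoting this to a genuine homotopy equivalence, however, requires passing through barycentric subdivisions and a carrier-supported simplicial approximation, and that is the delicate part I would expect to cost the most effort.
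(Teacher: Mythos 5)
The paper itself does not prove this statement: Theorem \ref{thmDowHom} is quoted as a classical result, with the homotopy version attributed to Björner \cite{bjorner1995} (Dowker's 1952 paper \cite{arDowkerCmp} gives only the homology statement). So there is no in-paper argument to compare against; what matters is whether your proof stands on its own, and it does. The cover of $K_R$ by the full simplices $K_y$ on the vertex sets $R^{-1}(y)$, $y \in Y$, is correct: every simplex of $K_R$ has a witness and hence lies in some $K_y$, and any intersection $K_{y_1} \cap \cdots \cap K_{y_k}$ is the full simplex on $R^{-1}(y_1) \cap \cdots \cap R^{-1}(y_k)$, hence empty or contractible. Your identification of the nerve $\mathcal{N}$ with $L_R$ is also right, and your indexed-nerve reading silently handles the only degenerate case, namely $y_1 \neq y_2$ with $R^{-1}(y_1) = R^{-1}(y_2)$: these are distinct vertices spanning a simplex both in $\mathcal{N}$ and in $L_R$. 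The one load-bearing external input is, as you say, the Nerve Lemma in its homotopy form for covers of a simplicial complex by subcomplexes with contractible nonempty intersections; that closed, simplicial version is exactly what \cite{bjorner1995} supplies, it is strictly stronger than Dowker's theorem (so there is no circularity), and finiteness of $X$ and $Y$ places you squarely within its hypotheses. Note that the Borsuk open-cover Nerve Theorem quoted in Section \ref{secData} of the paper would not be a valid substitute here, since your cover is closed. Finally, your assessment of the acyclic-carrier alternative is accurate: carriers give $H_*(K_R) \cong H_*(L_R)$ but not the homotopy equivalence claimed in the statement --- a homology isomorphism between non-simply-connected complexes cannot in general be upgraded --- which is precisely the gap between Dowker's original result and Björner's strengthening. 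So the Nerve Lemma route you chose is the appropriate one for the statement as phrased.
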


		The first version of Theorem (\ref{thmDowHom}) was done by C. Dowker in 1952 \cite{arDowkerCmp}. He defined the Dowker complex and shows that they are isomorphic in the \v{C}ech homology. In 1995, Björner \cite{bjorner1995} strengthened the result and showed the Theorem (\ref{thmDowHom}). In 2023, a different and interesting approach of the Dowker's Theorem \cite{arRectDow}. He showed a proof by using the rectangle complex. The Dowker Theorem has many applications in social studies with a method called Q-analysis develop by Atkin \cite{arQanalAtkin, arQanalAppSoc} and the theory of formal concept \cite{boFormalConcept}.

		We can define morphism between relations. Let $ R_1 \subset X_1 \times Y_1 $ and $ R_2 \subset X_2 \times Y_2 $ be relations. We say that $f : (X_1, Y_1, R_1) \to (X_2, Y_2, R_2) $ is a relation morphism if $ f = (f_1, f_2) $ is a pair of maps $ f_1 : X_1 \to X_2 $ and $ f_2 : Y_1 \to Y_2 $ such that for $ x \in X_1 $ and $ y \in Y_1 $, if $ x R_1 y $ implies that $ f_1(x) R_2 f_2(y) $. We obtain a category with relation as objects and relation morphism as morphisms.	For more information and results on functoriality and categorification of the Dowker theorem, we refer to \cite{arCoshefDow, arDowkerDuality, arFuncDowTheoAsNet, arFuncDowTheoNerves}.
		
		We need one more definitions on relations that we use in our algorithm.
		
		\begin{definition}\label{dfnColIre}
			A relation $ R \subset X \times Y $ is column irreducible, if for any $ y_1 $ and $ y_2 \in Y $, then $ R^{-1}(y_1) \not\subset R^{-1}(y_2) $.
		\end{definition}	
		
		If a relation $ R $ is column irreducible, we have a bijection between toplexes of $ K_R $ and elements of $ Y $.		
		
	\subsection{Gluing Theorem}	
		In our main proof, we use the Gluing Theorem. We cite two theorems from \cite{barmak2011algebraic}.
		
		\begin{theorem}[\protect{\cite[Th. A.2.4]{barmak2011algebraic}}]\label{thmA24}
			If $ A $ is a subcomplex of a CW-complex $ X $, the inclusion $ A \hookrightarrow X $ is a close cofibration.
		\end{theorem}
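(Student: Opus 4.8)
The plan is to prove the inclusion is a cofibration by verifying the homotopy extension property (HEP), treating closedness separately. Since a subcomplex $A$ of a CW-complex $X$ meets the closure of each cell in a subcomplex of that (finite) closure, $A \cap \overline{e}$ is closed for every cell $e$, and hence $A$ is closed in $X$; so the ``closed'' part of the conclusion is automatic and only the cofibration property needs work. For that I would use the standard reformulation: $i : A \hookrightarrow X$ is a cofibration if and only if $X \times \{0\} \cup A \times I$ is a retract of $X \times I$ (one direction applies the HEP to the universal test space $Y = X \times \{0\} \cup A \times I$, the other composes a retraction with the given data). The entire proof thus reduces to constructing a retraction $r : X \times I \to X \times \{0\} \cup A \times I$.

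The base case is a single cell. For the pair $(D^n, S^{n-1})$ there is an explicit retraction $\rho_n : D^n \times I \to D^n \times \{0\} \cup S^{n-1} \times I$ obtained by radial projection from a point placed above the center of the top face of the cylinder $D^n \times I \subset \mathbb{R}^{n+1}$, sending each vertical ray onto the union of the bottom disk and the lateral boundary. This $\rho_n$ is the engine of the whole argument.

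Next I would build the retraction inductively along the relative skeleta $X_n = A \cup X^{(n)}$, with $X_{-1} = A$, where $X_n$ is obtained from $X_{n-1}$ by a pushout attaching the $n$-cells of $X \setminus A$ along $\coprod S^{n-1} \to X_{n-1}$. Assuming a retraction $r_{n-1} : X_{n-1} \times I \to X_{n-1} \times \{0\} \cup A \times I$, I would extend it to $r_n$ cell by cell: on each attached $n$-cell I first apply $\rho_n$, then send the $D^n \times \{0\}$ part into $X_n \times \{0\}$ by the characteristic map and the $S^{n-1} \times I$ part into $X_{n-1} \times \{0\} \cup A \times I$ by $r_{n-1}$ composed with the attaching map. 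These definitions agree on the overlaps because $r_{n-1}$ restricts to the identity on $X_{n-1} \times \{0\}$, so by the universal property of the pushout $X_n \times I = (X_{n-1} \times I) \cup_{\coprod S^{n-1} \times I} (\coprod D^n \times I)$ — valid since $-\times I$ preserves such pushouts, $I$ being locally compact — they assemble into $r_n$.

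The main obstacle is the passage to the (possibly infinite) colimit and the continuity of the patched map. The $r_n$ are compatible by construction, so they assemble to a set-theoretic retraction $r$ on $X \times I = \bigcup_n X_n \times I$; continuity is the delicate point and is exactly where the CW hypothesis is used. Because $X$ carries the weak topology and $I$ is compact, $X \times I$ carries the CW (weak) topology, which coincides with the product topology, so $r$ is continuous iff its restriction to each closed cell times $I$ is continuous — and each such restriction is continuous by the inductive construction. Once continuity is granted, $r$ is the desired retraction, the HEP holds, and the \emph{closed} cofibration property follows.
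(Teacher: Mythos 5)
The paper itself offers no proof of this statement: it is imported verbatim as background from \cite[Th.~A.2.4]{barmak2011algebraic}, and its only role in the paper is to supply the closed-cofibration hypothesis needed in the pushout argument of Lemma~\ref{lemDummyVer}. So there is no internal argument to compare against; what you have written is a self-contained proof, and it is correct --- it is the classical cell-by-cell induction (Hatcher's Proposition~0.16, likewise standard in May and tom~Dieck). All the load-bearing steps are in place: closedness of a subcomplex (each $A \cap \overline{e}$ is the intersection of $\overline{e}$ with a finite subcomplex, hence compact and closed --- your phrase ``subcomplex of that closure'' is slightly loose, since $\overline{e}$ need not itself be a subcomplex, but the intended argument is the standard one); the retract characterization of the homotopy extension property, which is legitimate here precisely because you settle closedness first; the explicit radial retraction $\rho_n$ of $D^n \times I$ onto $D^n \times \{0\} \cup S^{n-1} \times I$; the pushout assembly over the relative skeleta $X_n = A \cup X^{(n)}$, where you rightly invoke local compactness of $I$ so that $- \times I$ preserves the attaching pushouts; and the continuity of the assembled map, where you rightly invoke compactness of $I$ so that $X \times I$ carries the weak topology and continuity can be checked cellwise. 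As for what each approach buys: the paper's bare citation is the appropriate choice for its purposes, since this theorem is textbook material orthogonal to its actual contribution (the reduction method), whereas your argument makes the background self-contained at the cost of roughly a page of standard point-set and CW technology that the paper deliberately outsources.
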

		
		There exist multiple versions of the Gluing Theorem. We use the following version. 
		
		\begin{theorem}[\protect{\cite[Th. A.2.5]{barmak2011algebraic}}]\label{thmA25}
			Suppose that the following diagram is a pushout of topological spaces
			\begin{center}
			\begin{tikzcd}
			 A \arrow[hookrightarrow]{d} \arrow[rightarrow]{r} & Y \arrow[rightarrow]{d} & \\
			 X \arrow[rightarrow]{r} & Z

			\end{tikzcd}
		\end{center}
		in which $ A \hookrightarrow X $ is a closed cofibration and $ A \to Y $ is a homotopy equivalence. Then, $ X \to Z $ is a homotopy equivalence.
		\end{theorem}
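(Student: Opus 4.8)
The plan is to prove the statement by producing an explicit homotopy inverse to the map $X \to Z$ and controlling all the auxiliary homotopies through the homotopy extension property (HEP) supplied by the closed cofibration $A \hookrightarrow X$. Write $i \colon A \hookrightarrow X$ for the cofibration, $f \colon A \to Y$ for the given homotopy equivalence, and let $g \colon X \to Z$ and $h \colon Y \to Z$ be the structure maps of the pushout $Z = X \cup_A Y$. Choose a homotopy inverse $f' \colon Y \to A$ with $f'f \simeq \mathrm{id}_A$ and $ff' \simeq \mathrm{id}_Y$. The goal is to produce $k \colon Z \to X$ together with homotopies $kg \simeq \mathrm{id}_X$ and $gk \simeq \mathrm{id}_Z$.

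First I would try to build $k$ by gluing: on the copy of $X$ in $Z$ set $k = \mathrm{id}_X$, and on the copy of $Y$ set $k = i \circ f'$. These two prescriptions agree on $A$ only up to the homotopy $i \circ (f'f) \simeq i$, so they do not define a map out of the pushout on the nose. This mismatch is exactly what the cofibration is for: the homotopy $A \times I \to X$ realizing $i \circ (f'f) \simeq i$ can be fed into the HEP of $i$ to produce a self-homotopy of $X$ that absorbs the discrepancy, and its endpoint is used to correct $i \circ f'$ so that the glued map becomes well defined on $Z$. The same device then yields the two required homotopies $kg \simeq \mathrm{id}_X$ and $gk \simeq \mathrm{id}_Z$, each assembled from the constant homotopy on $X$ together with the homotopies coming from $ff' \simeq \mathrm{id}_Y$ and $f'f \simeq \mathrm{id}_A$ on $Y$, and again rectified across $A$ by the HEP.

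A cleaner packaging, which I would actually prefer to write up, is to first replace $Y$ by the mapping cylinder $M_f$ of $f$. The inclusion $A \hookrightarrow M_f$ is a cofibration and, by the two-out-of-three property applied to $A \to M_f \to Y$ (the projection being a deformation retraction), it is also a homotopy equivalence; hence $A$ is a strong deformation retract of $M_f$. Because the retracting homotopy is \emph{stationary} on $A$, it glues with the constant homotopy on $X$ to a strong deformation retraction of $X \cup_A M_f$ onto $X$, so $X \to X \cup_A M_f$ is a homotopy equivalence --- here the strictness (stationarity on $A$) is precisely what makes the gluing legal and is what was missing in the naive approach above. It then remains to compare $X \cup_A M_f$ with $Z = X \cup_A Y$, where Dold's theorem on cofiber homotopy equivalences enters: the deformation retraction $M_f \to Y$ is a homotopy equivalence of spaces under $A$, and pushing it out along the cofibration $i$ preserves homotopy equivalence.

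The step I expect to be the main obstacle is exactly this passage from an ordinary homotopy equivalence to one that behaves well relative to $A$. The maps glue only up to homotopy, and the homotopies one naively writes down move points of $A$ (for instance the cylinder homotopy slides $A$ across $M_f$), so they cannot be combined with the constant homotopy on $X$ over the pushout. Converting these ``up to homotopy over $A$'' statements into honest maps and homotopies on $Z$, i.e.\ upgrading homotopy equivalences to cofiber homotopy equivalences, is the technical heart of the argument, and it is the reason the hypothesis that $A \hookrightarrow X$ be a closed cofibration cannot be dropped.
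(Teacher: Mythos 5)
The paper itself gives no proof of this statement --- it is imported verbatim from Barmak's book (Th.\ A.2.4/A.2.5 of the appendix there) --- so your proposal has to stand on its own merits, and it does not quite.

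The first half of your preferred argument is correct: $A \hookrightarrow M_f$ is a cofibration and, by two-out-of-three, a homotopy equivalence, hence the inclusion of a strong deformation retract; and because the retracting homotopy is stationary on $A$ it glues with the constant homotopy on $X$ to show $X$ is a strong deformation retract of $X \cup_A M_f$. The gap is in the step you yourself flag as the technical heart. You assert that the projection $q \colon M_f \to Y$ is ``a homotopy equivalence of spaces under $A$'' and that Dold's theorem lets you push it out along $i$. Dold's theorem requires \emph{both} structure maps $A \to M_f$ and $A \to Y$ to be cofibrations; the second one is the arbitrary homotopy equivalence $f$, which is not assumed (and in general fails) to be a cofibration. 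Worse, the conclusion you want --- that $q$ is a homotopy equivalence under $A$ --- is false in general: a map $s \colon Y \to M_f$ under $A$ must satisfy $s(f(a)) = (a,0)$ for every $a \in A$, so if $f$ is not injective (take $A = [0,1]$, $Y$ a point, $f$ the constant map, a homotopy equivalence) there exist \emph{no} maps $Y \to M_f$ under $A$ at all. The weaker reading of your claim, namely ``pushout along a cofibration preserves homotopy equivalences that happen to be maps under $A$,'' contains the Gluing Theorem itself as the special case $D = A$, $E = Y$, so invoking it would be circular.

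The repair keeps all of your ingredients but applies Dold's theorem in the configuration where its hypotheses actually hold. The space $X \cup_A M_f$ is the double mapping cylinder of $X \leftarrow A \rightarrow Y$, so it can equally be written as $M_i \cup_A Y$, where $M_i$ is the mapping cylinder of the \emph{cofibration} $i$, the copy of $A$ used for the gluing is the free (top) end of $M_i$, and $Y$ is attached along it via $f$. The collapse $r \colon M_i \to X$, $r([a,t]) = i(a)$, $r([x]) = x$, is a homotopy equivalence and a map under $A$ with respect to the top inclusion $A \hookrightarrow M_i$ and $i \colon A \hookrightarrow X$; both of these structure maps \emph{are} cofibrations, so Dold's theorem upgrades $r$ to a homotopy equivalence under $A$. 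A homotopy equivalence under $A$ is preserved by pushout along an \emph{arbitrary} map $A \to Y$, since inverses and homotopies stationary on $A$ glue with identities and constant homotopies on $Y$ (here one uses that $- \times I$ preserves pushouts). Hence $M_i \cup_A Y \to X \cup_A Y$ is a homotopy equivalence, the composite $X \to X \cup_A M_f = M_i \cup_A Y \to X \cup_A Y$ is precisely the canonical map $X \to Z$, and two-out-of-three finishes the proof. In short: Dold must be applied to the cylinder of $i$, not to the cylinder of $f$; the hypothesis that $i$ is a closed cofibration is consumed exactly there, whereas your write-up uses it only in the (correct) first half.
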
			
	
	\section{Reduction step}\label{secRedStep}
	
		In this section, we show a way to reduce a simplicial complex by a simple method of conning. The main idea is to add a dummy vertex with a simplicial cone and we remove some vertices by applying a sequence of strong collapses. In this way, we remove vertices and simplices. We show it for general simplicial complexes. We argue that the notation of relation and Dowker complex are well suited for the reduction step.
				
		The first step of the reduction is to add a dummy vertex and a simplicial cone to the initial simplicial complex. We use the Gluing Theorem.
	
		\begin{lemma}\label{lemDummyVer}
			Let $ K $ be a finite simplicial complex and a finite set of vertices $ A = \{  x_1, x_2, \ldots,$ $ x_n \} $. We consider the subcomplex $ L = \cup_{i=1}^n \overline{St(x_i)} $. We add a dummy vertex $ z $ and $ K' = K \cup( z * L )$. If $ L $ is contractible, then $ \vert K \vert $ and $ \vert K' \vert $ are homotopically equivalent.
		\end{lemma}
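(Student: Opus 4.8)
The plan is to realize $K'$ as a topological pushout along the subcomplex $L$ and then invoke the Gluing Theorem (Theorem \ref{thmA25}). First I would record the combinatorial facts that set up the pushout. Each closed star $\overline{St(x_i)}$ is a subcomplex of $K$, and a finite union of subcomplexes is again a subcomplex, so $L = \cup_{i=1}^n \overline{St(x_i)}$ is a subcomplex of $K$. The dummy vertex $z$ is chosen not to lie in $K$, so every simplex of the cone $z*L$ either avoids $z$, in which case it is a simplex of $L$, or else contains $z$. Hence $K \cap (z*L) = L$, and in $K' = K \cup (z*L)$ both $K$ and $z*L$ are subcomplexes whose intersection is exactly $L$.

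Next I would pass to geometric realizations. Because $K$ and $z*L$ are subcomplexes of $K'$ meeting in $L$, the square
\begin{center}
\begin{tikzcd}
\vert L \vert \arrow[hookrightarrow]{d} \arrow[hookrightarrow]{r} & \vert z*L \vert \arrow[hookrightarrow]{d} \\
\vert K \vert \arrow[hookrightarrow]{r} & \vert K' \vert
\end{tikzcd}
\end{center}
is a pushout of topological spaces, since geometric realization carries a union of subcomplexes to the pushout along the realization of their intersection. It then remains to check the two hypotheses of Theorem \ref{thmA25}. The left vertical map $\vert L \vert \hookrightarrow \vert K \vert$ is the inclusion of a subcomplex, hence a closed cofibration by Theorem \ref{thmA24}. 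For the top map, $z*L$ is a simplicial cone and is therefore contractible, while $L$ is contractible by hypothesis; as both spaces are contractible, the inclusion $\vert L \vert \hookrightarrow \vert z*L \vert$ is a homotopy equivalence.

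With both hypotheses verified, Theorem \ref{thmA25} gives that the remaining map $\vert K \vert \hookrightarrow \vert K' \vert$ is a homotopy equivalence, which is exactly the claim.

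The step I expect to require the most care is the identification of $\vert K' \vert$ as the genuine topological pushout in the diagram above: one must confirm that the simplicial intersection $K \cap (z*L)$ equals $L$ on the nose, which is precisely where the hypothesis $z \notin K$ enters, and that geometric realization commutes with this gluing. Both facts are standard for subcomplexes of a simplicial complex, but they are the hinge on which the application of the Gluing Theorem turns. Once the pushout is in hand, the contractibility of the cone and the homotopy-equivalence criterion for maps of contractible spaces make the rest immediate.
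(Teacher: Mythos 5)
Your proof is correct and follows essentially the same route as the paper's: the same pushout square of geometric realizations, Theorem \ref{thmA24} for the closed cofibration, contractibility of $L$ and of the cone $z*L$ for the homotopy equivalence, and Theorem \ref{thmA25} to conclude. Your added care in verifying $K \cap (z*L) = L$ (via $z \notin K$) makes explicit a point the paper asserts without comment, but the argument is the same.
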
			
		\begin{proof}
			We consider the following diagram : 
			\begin{center}
				\begin{tikzcd}
			 \vert L \vert \arrow[hookrightarrow]{d} \arrow[hookrightarrow]{r} & \vert z*L \vert \arrow[hookrightarrow]{d} & \\
			 \vert K \vert \arrow[hookrightarrow]{r} & \vert K' \vert
				\end{tikzcd}.
			\end{center}	
			
			The diagram is a pushout because we have $ (z*L)  \cup K = K'$ and $ (z * L) \cap K = L $. Moreover, $ K $ and $ z*L $ are closed in $ K' $. We have that $ L $ is a subcomplex of $K$. By Theorem (\ref{thmA24}), we obtain that $ \vert L  \vert \hookrightarrow \vert K \vert $ is a closed cofibration. $L $ and $ z*L $ are contractible and this implies that $ \vert L \vert \hookrightarrow \vert z*L \vert $ is a homotopy equivalence. By the Gluing Theorem (\ref{thmA25}), we have that $ \vert K \vert$ and $ \vert K' \vert $ are homotopically equivalent.	

 		\end{proof}			
	
		The second step is to remove all the vertices in $L$ that are now dominated from the new vertex $z$.
	
		\begin{lemma}\label{lemRemoveVert}
			Let $ K $ be a finite simplicial complex and a finite set of vertices $ A = \{ x_1, x_2, \ldots,$ $ x_n \} $. We consider the subcomplex $ L = \cup_{i=1}^n \overline{St(x_i)}$. If there exists a $ z $ such that $ z*L $ is a subcomplex of $ K $, then $ K $ is homotopy equivalent to $ K \setminus A $.
		\end{lemma}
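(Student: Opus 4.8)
The plan is to show that each vertex $x_i \in A$ is dominated by $z$ in $K$, and then to delete the $x_i$ one at a time by strong collapses, each of which preserves the homotopy type. Before starting I would record two elementary facts. Since $x_i \in St(x_i) \subseteq \overline{St(x_i)} \subseteq L$ for every $i$, we have $A \subseteq L$; and since $z$ is the apex of the simplicial cone $z*L$, it is not a vertex of $L$, so $z \notin A$ and $z \neq x_i$ for all $i$. In particular, collapsing any $x_i$ will never remove $z$, which is the feature that makes the iteration work.

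The key step is the domination claim: for each $i$, the link $Lk(x_i)$ is a simplicial cone with apex $z$. To verify this I would take $\tau \in Lk(x_i)$ with $z \notin \tau$ (the case $z \in \tau$ being trivial), so that $\sigma := \tau \dot{\cup} \{x_i\} \in K$ and $x_i \notin \tau$. Since $\sigma$ contains $x_i$, we have $\sigma \in St(x_i) \subseteq \overline{St(x_i)} \subseteq L$. Because $z \notin L$ we get $z \notin \sigma$, hence $\sigma \dot{\cup} \{z\} \in z*L \subseteq K$, which is exactly the statement that $\tau \dot{\cup} \{z\} \in Lk(x_i)$. Thus $Lk(x_i)$ is closed under adjoining $z$; together with $\{z\} \in Lk(x_i)$ (valid since $[z,x_i] \in z*L \subseteq K$) this exhibits $Lk(x_i)$ as a cone with apex $z$, so $x_i$ is dominated by $z$ in the sense of Section~\ref{secPrelim}.

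Granting the claim, I would remove the $x_i$ successively. By the strong collapse result recalled earlier, since $x_1$ is dominated by $z$ there is a strong collapse $K \searrow_{SC} K \setminus \{x_1\}$, which is a strong deformation retract and hence a homotopy equivalence. The part I expect to require the most care is checking that domination persists after a deletion. Writing $K_1 := K \setminus \{x_1\}$, its simplices are exactly the simplices of $K$ that avoid $x_1$, so if $\sigma \in K_1$ contains $x_2$ then $\sigma \dot{\cup} \{z\} \in K$ by domination of $x_2$ in $K$, and $\sigma \dot{\cup} \{z\}$ still avoids $x_1$ because $z \neq x_1$; hence $\sigma \dot{\cup} \{z\} \in K_1$ and $x_2$ remains dominated by $z$ in $K_1$. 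Iterating this observation, $x_j$ stays dominated by $z$ in $K \setminus \{x_1, \ldots, x_{j-1}\}$ for every $j$, precisely because $z$ is never among the deleted vertices.

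Finally I would chain the resulting strong collapses $K \searrow_{SC} K \setminus \{x_1\} \searrow_{SC} \cdots \searrow_{SC} K \setminus A$. Each arrow is a homotopy equivalence, so their composite gives that $K$ is homotopy equivalent to $K \setminus A$, as asserted. The only genuine subtlety is the persistence of domination along the sequence; every other step is a matter of unwinding the definitions of $St$, $Lk$, and the join $z*L$.
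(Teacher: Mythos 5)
Your proposal is correct and takes essentially the same approach as the paper: show that $z$ dominates each $x_i \in A$, then delete the vertices of $A$ one at a time by strong collapses. The paper's own proof is only three lines (it asserts domination directly from $\overline{St(x_i)} \subset z*L$ and says nothing about domination persisting after each deletion), so your write-up simply supplies the details --- the link-is-a-cone verification and the persistence argument --- that the paper leaves implicit.
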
	
		\begin{proof}
	    The vertex $ z $ dominate each $  x_i \in A$ because $ \overline{St(x_i)} \subset z * L $. Therefore, we can apply a finite sequence of strong collapses on each vertex in $ A $. $K$ is homotopy equivalent to $ K \setminus A $.
		\end{proof}
		
		By combining the two previous Lemmas, we get our reduction method stated as follows.	
		
		\begin{theorem}\label{thmReduceStep}
			Let $ K $ be a finite simplicial complex and  $ A = \{ x_1, x_2, \ldots,$ $ x_n \} $ a finite set of vertices. We consider the subcomplex $ L = \cup_{i=1}^n \overline{St(x_i)} $. If $ L $ is contractible, then $ K $, $ K' = K \cup (z*L) $ and $ K' \setminus A = K'' $ are homotopically equivalent.
		\end{theorem}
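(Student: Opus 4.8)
The plan is to obtain the theorem as a direct concatenation of the two preceding lemmas, so the real work is to check that their hypotheses chain together correctly and that the \emph{same} auxiliary vertex $z$ can be used in both steps.

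First I would apply Lemma \ref{lemDummyVer} verbatim. Since $L = \cup_{i=1}^n \overline{St(x_i)}$ is assumed contractible, the lemma furnishes a dummy vertex $z$ and gives $K' = K \cup (z * L)$ together with a homotopy equivalence $\vert K \vert \simeq \vert K' \vert$. Nothing new is needed here; I just record that $z \notin K$ and that $z * L$ is, by construction, a subcomplex of $K'$.

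The second step is to feed $K'$ into Lemma \ref{lemRemoveVert}, and here lies the one point that deserves care. Lemma \ref{lemRemoveVert} only requires the existence of a vertex $z$ with $z * L \subseteq K'$, which holds immediately, and its proof then establishes that $z$ dominates every $x_i$. The subtlety is that the closed star of $x_i$ \emph{grows} when we pass from $K$ to $K'$: in $K'$ the vertex $x_i$ acquires the new cofaces $z * \sigma$ for each $\sigma \in \overline{St_K(x_i)}$. So before invoking the lemma I would verify explicitly that $\overline{St_{K'}(x_i)} \subseteq z * L$. Indeed, any simplex of $K'$ containing $x_i$ is either a simplex of $K$ containing $x_i$ — hence it lies in $\overline{St_K(x_i)} \subseteq L \subseteq z*L$ — or it has the form $z * \tau$ with $\tau \in L$ and $x_i \in \tau$, which again lies in $z * L$; taking faces keeps us inside the subcomplex $z * L$. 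This is precisely the inclusion the domination argument of Lemma \ref{lemRemoveVert} needs, so $z$ dominates each $x_i$ in $K'$ and we obtain $\vert K' \vert \simeq \vert K' \setminus A \vert = \vert K'' \vert$.

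Finally I would close by transitivity of homotopy equivalence: combining $\vert K \vert \simeq \vert K' \vert$ from the first step with $\vert K' \vert \simeq \vert K'' \vert$ from the second yields that $K$, $K'$ and $K''$ are all homotopy equivalent. I expect the only genuine obstacle to be the bookkeeping just described — confirming that the enlarged star $\overline{St_{K'}(x_i)}$ still sits inside $z * L$, so that the domination hypothesis survives the transition from $K$ to $K'$ — together with the standard observation that the strong collapses of the $x_i$ may be performed sequentially, each $x_i$ remaining dominated by $z$ after the others have been removed.
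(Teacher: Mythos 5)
Your proof is correct and follows essentially the same route as the paper, which likewise obtains the theorem by composing Lemma \ref{lemDummyVer} (giving $\vert K \vert \simeq \vert K' \vert$) with Lemma \ref{lemRemoveVert} applied to $K'$ (giving $\vert K' \vert \simeq \vert K'' \vert$ via strong collapses of the vertices of $A$ onto $z$), and then invoking transitivity. Your explicit verification that $\overline{St_{K'}(x_i)} \subseteq z * L$ — i.e.\ that the domination by $z$ survives the enlargement of the stars when passing from $K$ to $K'$, and persists as the $x_i$ are collapsed one by one — is a point the paper glosses over when it re-invokes Lemma \ref{lemRemoveVert} inside $K'$, so your added care strengthens rather than changes the argument.
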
	
		
		In Lemma (\ref{lemDummyVer}), we have an inclusion $ i : K \hookrightarrow K' $ which is an homotopy equivalence. In Lemma (\ref{lemRemoveVert}), we apply a finite sequence of strong collapses for each vertex in $A$. For each strong collapse, we have a retraction map. We compose all these retractions and we obtain a map $ r : K' \to K' \setminus A $ where all vertices in $A$ are mapped to $ z $ and the other vertices to itself. Finally, our reduction method is to apply $ r \circ i $.

		 Theorem (\ref{thmReduceStep}) reduced the initial simplicial complex because we add one vertex and remove $n$ vertices if $ n > 1 $. Moreover, the number of simplices is also reduced by at least $ n - 1 $ simplices.

	\subsection{Computation of Reduction Step with a Relation}
 	
 		We want to take advantage of relations and Dowker complexes to compute our reduction method.
 		
 		We use a binary matrix to define a relation. From a relation $ R \subset X \times Y $ such that $ X = \{ x_1, x_2, \ldots, x_m\} $ and $ Y = \{y_1, y_2, \ldots, y_n  \} $, we define the associated matrix $ M_R $ where $ m_{ij} = 1 $, if $x_i R y_j $ otherwise $ m_{ij} = 0 $. The binary matrix is our only data structure.  

		We can easily construct a relation $ R $ such that a simplicial complex $ S $ is equal to $K_R$. We take $ X $ the set of vertices of $ S $ and $ Y $ the set of toplexes $ S $. We have $ x R y $, if $ x \in Y $. We obtain that $ K_R = S $. Therefore, the relation reduction method can reduce any simplicial complex.
		
		In general, those matrices may have a lot of zeros. For a simplicial complex of dimensions $ d $, the number of ones in a column is bounded by $ d + 1 $. The dimension of a simplicial complex is way smaller than the number vertices and we obtain that the binary matrix is often sparse. Our structure is simple and memory efficient.
 		
		With this notation, we can easily compute the simplicial cone of $ z*L $ where $L$ is a set of toplexes from $K$. We add a new vertex $  z$ to obtain $ X' = X \cup \{ z \} $. The new relation is $ R' \subset X' \times Y $ and we set $ R'(z) = L $ and $ R'(x) = R(x) $ for $ x \neq z $. We obtain $ K_{R'} = K_R  \cup (z*L) $. 
 		
		An important computation we need to do is $ \overline{St(x)} $. This submatrix $ M_{R_{St}} $ is easily computed by considering the relation $ R_{St} = X \times R(x) $. For $ \cup_{x \in A} \overline{St(x)} $, we take the union which is $ R_{St} = X \times \cup_{x \in A} R(x) $. 
 		
		\begin{lemma}
			We have $ K_{R_{St}} = \cup_{x \in A} \overline{St(x)} $.
		\end{lemma}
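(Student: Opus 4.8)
The plan is to prove the equality by a direct double inclusion, reducing both sides to the same collection of closed simplices indexed by the columns in $\bigcup_{x \in A} R(x)$. First I would record the dictionary between the relation $R$ and the closed stars of $K_R$. For a single vertex $x$, I claim
\begin{equation*}
	\overline{St(x)} = \bigcup_{y \in R(x)} \overline{R^{-1}(y)},
\end{equation*}
where $R^{-1}(y) = \{ x' \in X \mid x' R y \}$ is the simplex carried by the column $y$. The inclusion ``$\supseteq$'' is immediate: each $y \in R(x)$ gives $x \in R^{-1}(y) \in K_R$, so every face of $R^{-1}(y)$ is a face of a simplex containing $x$ and hence lies in $\overline{St(x)}$. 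For ``$\subseteq$'', take $\sigma \in \overline{St(x)}$, so $\sigma \leq \tau$ for some $\tau \in K_R$ with $x \in \tau$; by Definition (\ref{dfnDowK}) there is a $y$ with $\tau \subseteq R^{-1}(y)$, and since $x \in \tau$ we get $y \in R(x)$, whence $\sigma \leq R^{-1}(y)$ with $y \in R(x)$.

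Next I would apply the same characterization to $K_{R_{St}}$. Since $R_{St}$ is the restriction of $R$ to the columns $Y' = \bigcup_{x \in A} R(x)$ (the incidences inside each retained column are unchanged, so $R_{St}^{-1}(y) = R^{-1}(y)$ for every $y \in Y'$), the simplices of $K_{R_{St}}$ are by Definition (\ref{dfnDowK}) exactly the faces of the column-simplices $R^{-1}(y)$ with $y \in Y'$, that is $K_{R_{St}} = \bigcup_{y \in Y'} \overline{R^{-1}(y)}$.

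Finally I would combine the two computations. Using the single-vertex formula for each $x \in A$ and then swapping the order of the unions,
\begin{equation*}
	\bigcup_{x \in A} \overline{St(x)} = \bigcup_{x \in A} \bigcup_{y \in R(x)} \overline{R^{-1}(y)} = \bigcup_{y \in \bigcup_{x \in A} R(x)} \overline{R^{-1}(y)} = \bigcup_{y \in Y'} \overline{R^{-1}(y)},
\end{equation*}
which is precisely the expression obtained for $K_{R_{St}}$. This yields the desired equality.

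The only delicate point is the reading of the notation $R_{St} = X \times \bigcup_{x \in A} R(x)$: it denotes the column-restriction of $R$ to $Y'$ (equivalently $R_{St}(x) = R(x) \cap Y'$), not the full Cartesian product, and one must check that discarding the columns outside $Y'$ neither alters nor drops the incidences within a retained column, so that $R_{St}^{-1}(y) = R^{-1}(y)$. Once that is fixed the argument is a formal manipulation of unions; the substantive content is the single-vertex identity $\overline{St(x)} = \bigcup_{y \in R(x)} \overline{R^{-1}(y)}$, whose ``$\subseteq$'' direction relies on the fact that any simplex containing $x$ is a face of a column-simplex $R^{-1}(y)$ that also contains $x$.
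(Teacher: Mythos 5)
Your proof is correct and is essentially the paper's own argument: both come down to the same double inclusion, whose key facts are that any simplex of $\overline{St(x)}$ is a face of a column simplex $R^{-1}(y)$ with $y \in R(x)$, and that restricting to the columns $Y' = \cup_{x \in A} R(x)$ leaves the incidences within those columns unchanged, so $R_{St}^{-1}(y) = R^{-1}(y)$. You merely package the element-chasing into the reusable identity $\overline{St(x)} = \cup_{y \in R(x)} \overline{R^{-1}(y)}$ followed by a swap of unions, whereas the paper runs the same witnesses directly on simplices.
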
 
		\begin{proof}
			Let $ \sigma = [ x_1', x_2', \ldots, x_m' ] \in K_{R_{St}} $. There exists $ y \in \cup_{x \in A} R_{St}(x)  $ such that $ x_j' R_{St} y $ for all $j$. If there is a $ x_j \in A $, then $ \sigma \in \cup_{x \in A} \overline{St(x)} $. Otherwise, let $ \tau $ be the toplex with all the vertices in $ R_{St}^{-1}(y) $. We have $  \sigma \leq \tau $ and $ R_{St}^{-1}(y) \cap A \neq \emptyset $. Therefore, $ \sigma \in \cup_{x \in A} \overline{St(x)} $. 
			
			Let $ \sigma = [x_1', x_2', \ldots, x_m'] \in \cup_{x \in A}\overline{St(x)} $. There exist a toplex $ \tau \in \cup_{x \in A}\overline{St(x)} $ such that $ \sigma \leq \tau $, and there exist an $ x \in A$ such that $ x \in \tau $. By definition of $ K_R $, there exists a $ y \in R(x) $ such that $ x_j' R y$ for all $j$. By definition of $ R_{St} $ and $ y \in R(x) $, we also have that $ x_j' R_{St} y$. We obtain $ \tau \in K_{R_{St}} $ and this implies $ \sigma \in K_{R_{St}} $.
		\end{proof}
 		
		The second part of our reduction method is to remove vertices. We remove the row associated to $x_i$ in $ M_R $.

		In summary, our reduction method in matrix notation goes as follows.  We compute the submatrix associated to $ R_{St} = A \times \cup_{x \in A} R(x) $. We verify if $K_{R_{St}}$ is contractible. If this is true, then we add a new vertex $ z $ which add a new row in the matrix where $ R'(z) = \cup_{x \in A} R(x) $. Finally, remove all rows in the matrix $ M_{R'}$  that is associated to a vertex in $ A $. 
		
		There is still a major issue. The verification of $ L = \cup_{x \in A} \overline{St(x)}$ is contractible. We could directly compute the homotopy or the homology of $ L $. But this goes against our idea of using only relation to do our reduction method. Instead, we verify if it is strong collapsible. This work is already done in \cite{strongColPers}. We adapt their result with the notation of relation.
		
		\begin{lemma}\label{lemStrRed}
			Let $ R \subset X \times Y $ be a relation. Suppose there exists $ x_1, x_2 \in X $ such that $ R(x_1) \subset R(x_2) $. We consider $ R' \subset ( X \setminus \{ x_1 \} ) \times Y $ where $ R'(x) = R(x) $ for $ x \in X \setminus \{ x_1 \} $. Then, $\vert K_{R} \vert $ is homotopic to $\vert K_{R'} \vert $.
		\end{lemma}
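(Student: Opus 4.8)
The plan is to reinterpret the hypothesis $R(x_1) \subseteq R(x_2)$ as the statement that the vertex $x_1$ is dominated by $x_2$ in the Dowker complex $K_R$, and then to invoke the strong collapse machinery recalled in the preliminaries (following \cite{strongHomType}), together with an identification of the strong-collapsed complex with $K_{R'}$. Throughout I assume $x_1 \neq x_2$ and $R(x_1) \neq \emptyset$, so that $x_1$ is genuinely a vertex of $K_R$; if $R(x_1) = \emptyset$ the statement is vacuous and should simply be flagged.

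First I would establish the domination. Any simplex $\sigma \in K_R$ containing $x_1$ has a witness $y \in Y$ with $\sigma \subseteq R^{-1}(y)$; in particular $x_1 R y$, i.e. $y \in R(x_1)$. By hypothesis $y \in R(x_2)$, so $x_2 \in R^{-1}(y)$, and hence $\sigma \cup \{ x_2 \} \subseteq R^{-1}(y)$ is again a simplex of $K_R$. Taking $\sigma$ to be the singleton $\{x_1\}$ already shows $\{x_1, x_2\} \in K_R$, so $x_2 \in Lk(x_1)$; letting $\sigma$ range over all of $Lk(x_1)$ shows that $Lk(x_1)$ is closed under adjoining $x_2$. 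Setting $L$ to be the subcomplex of $Lk(x_1)$ consisting of the simplices not containing $x_2$, this gives $Lk(x_1) = x_2 * L$, which is exactly the condition that $x_1$ is dominated by $x_2$.

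Next, by the strong collapse result recalled in the preliminaries, deleting a dominated vertex is a strong deformation retract, so $\vert K_R \vert \simeq \vert K_R \setminus \{ x_1 \} \vert$, where $K_R \setminus \{x_1\}$ is obtained by removing $x_1$ and every simplex containing it. The final, and only genuinely computational, step is to identify $K_R \setminus \{x_1\}$ with $K_{R'}$. Both have vertex set $X \setminus \{ x_1 \}$, and since $R'(x) = R(x)$ for every remaining vertex, a subset $S \subseteq X \setminus \{x_1\}$ lies in $K_{R'}$ iff some $y$ satisfies $S \subseteq R^{-1}(y)$, which is precisely the condition for $S$ to be a simplex of $K_R$ not containing $x_1$. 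Hence the two complexes coincide, and chaining the equivalences yields $\vert K_R \vert \simeq \vert K_{R'} \vert$.

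The main obstacle is, frankly, slight: the topological content is the homotopy invariance of removing a dominated vertex, which is already available from \cite{strongHomType}, so the work is almost entirely one of packaging that fact into the relation language. The one place demanding care is the bookkeeping in the identification step — checking that restricting the relation to $X \setminus \{x_1\}$ neither silently deletes a simplex that should survive nor creates a spurious one — since a witness $y$ for a simplex $S \subseteq X \setminus \{x_1\}$ may also relate to $x_1$, a harmless but easy-to-overlook point.
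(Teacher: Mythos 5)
Your proof is correct, but it takes a genuinely different route from the paper's. The paper never analyzes $K_R$ directly: it observes that deleting the row $x_1$ leaves the dual Dowker complex untouched, i.e.\ $L_R = L_{R'}$, because any simplex of $L_R$ whose witness is $x_1$ is also witnessed by $x_2$ (this is exactly $R(x_1) \subset R(x_2)$ read column-wise), and then applies Dowker's Theorem (Theorem \ref{thmDowHom}) on both sides: $\vert K_R \vert \simeq \vert L_R \vert = \vert L_{R'} \vert \simeq \vert K_{R'} \vert$. That argument is three lines and stays entirely in the relation/matrix language, which suits the paper's computational framing. Your argument instead works on the $K$ side: you verify that $R(x_1) \subset R(x_2)$ forces $Lk(x_1) = x_2 * L$, so $x_1$ is dominated by $x_2$, delete it by a strong collapse, and identify the deleted complex with $K_{R'}$. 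This costs more bookkeeping (all of which you carry out correctly), but it buys a strictly stronger conclusion: $K_{R'}$ is a strong deformation retract of $K_R$, so the two complexes have the same \emph{strong} homotopy type, not merely the same homotopy type --- something the Dowker-duality route cannot see. That stronger fact is precisely what the paper invokes later, when it iterates this lemma on rows and columns and asserts (citing \cite{strongColPers}) that the fully reduced complex has the same strong homotopy type as the input, so that reduction to a point certifies strong collapsibility; your proof justifies that assertion directly, whereas the paper's own proof defers it to the citation. Two small points of care, which both arguments share: one must have $x_1 \neq x_2$ (if $\subset$ is read as $\subseteq$, the case $x_1 = x_2$ would make the lemma false --- in the paper's proof the witness $x_2$ would also be deleted, and in yours no dominating vertex would remain), and the degenerate case $R(x_1) = \emptyset$ is trivial rather than vacuous, since then $x_1$ is not a vertex of $K_R$ at all and $K_R = K_{R'}$.
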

		\begin{proof}
			$ R(x_1) \subset R(x_2) $ implies that for all $ y \in Y $ such that $ x_1 R y $, then $ x_2 R y $. This implies that $ L_{R} = L_{R'} $. By Dowker Theorem, we have that $ \vert K_R \vert $ is homotopic equivalent to $ \vert K_{R'} \vert $.
		\end{proof}
					
		A simple algorithm is to remove all rows that satisfies the Lemma (\ref{lemStrRed}). After that we transpose the matrix, we repeat the process until there is no more reduction possible. The resulted simplicial complex has the same strong homotopy type of the initial simplicial complex \cite{strongColPers}. Therefore, if the simplicial complex is reduce to a point, then we obtain that the simplicial complex is strong collapsible and this implies that it is homotopically trivial.

		The simplex reduction method from Lemma (\ref{lemStrRed}) is a special case of Theorem (\ref{thmReduceStep}). This is when $n = 2 $ and we obtain that $ R(z) = R(x_1) \cup R(x_2) = R(x_2) $, because $ R(x_1) \subset R(x_2) $. We only need to remove the row associated to $x_2$.
			
		\begin{example}
			We apply the Theorem (\ref{thmReduceStep}) on Figure \ref{figIntroCmpa}. First, we compute the relation $R$  such that $ K_R = S $. We choose the set of vertices $ A = \{ x_3, x_4 \} $. We compute $L = \overline{St(x_3)} \cup \overline{St(x_4)} $ which is the matrix $ M_{R_{St}} $. $ K_{R_{St}} $ is strong collapsible. We add a new row $z$ with the value $ R(z) = R(x_3) \cup R(x_4) $ to the matrix $ M_R $ and we obtain $ M_{R'} $. We remove the row $ x_3 $ and $ x_ 4$ of the matrix $ M_{R'} $ to finally obtain the reduced matrix $ M_{R''} $. At Figure (\ref{figIntro}), we see the different step of the reduction and we have the different matrices below.
		
			\begin{align*}
			M_R & = \begin{bmatrix}
				1 & 1 & 0 & 0 & 0 & 0 \\
				1 & 0 & 1 & 0 & 0 & 0 \\
				0 & 1 & 1 & 1 & 0 & 0 \\
				0 & 0 & 1 & 1 & 1 & 0 \\
				0 & 0 & 0 & 1 & 0 & 1 \\
				0 & 0 & 0 & 0 & 1 & 1
			\end{bmatrix} \quad  M_{R_{St}} = \begin{bmatrix}
				1 & 0 & 0 & 0 \\
				0 & 1 & 0 & 0 \\
				1 & 1 & 1 & 0 \\
				0 & 1 & 1 & 1 \\
				0 & 0 & 1 & 0 \\
				0 & 0 & 0 & 1
			\end{bmatrix} \\
			 M_{R'} & = \begin{bmatrix}
				1 & 1 & 0 & 0 & 0 & 0 \\
				1 & 0 & 1 & 0 & 0 & 0 \\
				0 & 1 & 1 & 1 & 0 & 0 \\
				0 & 0 & 1 & 1 & 1 & 0 \\
				0 & 0 & 0 & 1 & 0 & 1 \\
				0 & 0 & 0 & 0 & 1 & 1 \\
				0 & 1 & 1 & 1 & 1 & 0 	
			\end{bmatrix} \quad	 M_{R''} = \begin{bmatrix}
				1 & 1 & 0 & 0 & 0 & 0 \\
				1 & 0 & 1 & 0 & 0 & 0 \\
				0 & 0 & 0 & 1 & 0 & 1 \\
				0 & 0 & 0 & 0 & 1 & 1 \\
				0 & 1 & 1 & 1 & 1 & 0 	
			\end{bmatrix}
			\end{align*}			 							
		
		\end{example}

	\section{How to construct a relation from various data types}\label{secData} 			
	
		The main application of the relation reduction method is to reduce the Dowker complex $K_R$ associated to a relation. This can be used to simplify the simplicial complex and to preserve topological features. In this section, we will discuss different data structures that we can define by a relation. 
		 		
 		A common structure used in computational topology is simplicial complex. We remind the construction of relation for simplicial complex. Let $ R $ be a relation and $S$ a simplicial complex. We take $ X $ the set of vertices of $ S $ and $ Y $ the set of toplexes $ S $. We set $ x R y $, if $ x \in y $. We obtain that $ K_R = S $.

		We can generalize it for covers of a topological space $X$. We suppose that the covers have the finite intersection property. From a cover $N$, we construct a simplicial complex $ C(N) $ where $n_i$ is a vertex from each set of $N$ and a simplex $ [ n_1, n_2, \ldots n_m ]  \in C(N)$, if $ n_1 \cap n_2 \cap \ldots \cap n_m $ is non-empty. There are different versions of the Nerve Theorem. We cite the version of Borsuk.
		
		\begin{theorem}[\cite{arNerveBorsuk}]
			Let $ N $ be a cover which satisfies the condition that any intersection of elements of $N$ is either empty or contractible, then $ \vert C(N) \vert $ is homotopy equivalent to $ X $. 
		\end{theorem}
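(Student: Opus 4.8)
The plan is to prove the statement by induction on the number $k$ of elements in the cover $N = \{U_1, \ldots, U_k\}$, using the Gluing Theorem (\ref{thmA25}) as the engine that propagates homotopy equivalences across pushouts. For the base case $k = 1$ one has $X = U_1$, which is contractible by hypothesis (reading $U_1$ as the intersection of the one-element subfamily), while $C(N)$ is a single vertex; both sides are contractible, so the statement holds.

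For the inductive step I would set $X' = U_1 \cup \cdots \cup U_{k-1}$, covered by $N' = \{U_1, \ldots, U_{k-1}\}$, and decompose $X = X' \cup U_k$. The key combinatorial observation is that the closed star $\overline{St(n_k)}$ of the last vertex of $C(N)$ is the simplicial cone $n_k * Lk(n_k)$, and that its link is exactly the nerve of the trace cover $N'' = \{U_k \cap U_i : i < k\}$ of the overlap $U_k \cap X'$: a face $[n_{i_1}, \ldots, n_{i_m}]$ with all indices less than $k$ lies in $Lk(n_k)$ iff $U_k \cap U_{i_1} \cap \cdots \cap U_{i_m} \neq \emptyset$, which is precisely the condition for it to be a simplex of $C(N'')$. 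Hence $\overline{St(n_k)} = n_k * C(N'')$ is contractible, and one checks that $C(N) = C(N') \cup \overline{St(n_k)}$ with $C(N') \cap \overline{St(n_k)} = C(N'')$, so that $|C(N)|$ is the pushout of the span $|\overline{St(n_k)}| \leftarrow |C(N'')| \to |C(N')|$.

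This simplicial pushout sits over the topological span $U_k \leftarrow (U_k \cap X') \to X'$, whose pushout is $X$. By the inductive hypothesis applied to $N'$ and to $N''$ (both of cardinality $k-1$, where $N''$ inherits the good-cover condition since a finite intersection of its members equals $U_k \cap U_{i_1} \cap \cdots \cap U_{i_m}$, again an intersection of members of $N$), the comparison maps $|C(N')| \to X'$ and $|C(N'')| \to U_k \cap X'$ are homotopy equivalences; and since both $\overline{St(n_k)}$ and $U_k$ are contractible, $|\overline{St(n_k)}| \to U_k$ is one as well. I would then invoke the map-of-pushouts form of the Gluing Theorem (the two-square generalization of (\ref{thmA25}), in which the left-hand legs $|C(N'')| \hookrightarrow |C(N')|$ and $U_k \cap X' \hookrightarrow X'$ are closed cofibrations, the first by (\ref{thmA24})) to conclude that the induced map $|C(N)| \to X$ is a homotopy equivalence.

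The main obstacle is organizing this comparison into an honest commuting ladder between the two spans, rather than merely exhibiting equivalent corners: one must produce a single map $|C(N)| \to X$ (for instance the barycentric map built from a partition of unity subordinate to the cover) whose restrictions to the three corners recover the inductive equivalences, so that the gluing lemma genuinely applies. A secondary point to pin down is that the topological square is a true homotopy pushout, i.e. that $U_k \cap X' \hookrightarrow X'$ is a cofibration; this is where the niceness hypotheses on the cover (closedness or local contractibility guaranteeing that the inclusions are cofibrations) enter. Granting these, the induction closes and the statement follows.
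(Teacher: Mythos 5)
The paper itself contains no proof of this statement: it is imported verbatim as Borsuk's Nerve Theorem with the citation \cite{arNerveBorsuk} and used as a black box in Section \ref{secData}, so your proposal can only be measured against the standard proofs in the literature.

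Your combinatorial setup is correct and is indeed the skeleton of one classical argument for finite covers: $C(N) = C(N') \cup \overline{St(n_k)}$, the identification $Lk(n_k) \cong C(N'')$ for the trace cover, hence $|C(N)|$ realized as the pushout of $|\overline{St(n_k)}| \hookleftarrow |C(N'')| \hookrightarrow |C(N')|$, with the good-cover condition inherited by $N'$ and $N''$. The genuine gap is the one you name but do not close, and it is not a routine technicality: your inductive hypothesis only supplies that each corner of the simplicial span is homotopy equivalent to the corresponding corner of the topological span, and corner-wise equivalence does not determine a pushout. For instance, the spans $D^2 \hookleftarrow S^1 \to S^1$ given by the identity map and by the degree-two self-map of $S^1$ have identical corners and identical closed-cofibration left legs, yet their pushouts are $D^2$ and $\mathbb{RP}^2$. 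So the ladder version of the Gluing Theorem cannot even be invoked until the induction is strengthened to carry actual maps forming commuting squares, and that strengthened statement is the whole difficulty. Your proposed repair does not supply it: a partition of unity subordinate to $N$ gives a canonical map $X \to |C(N)|$, in the opposite direction to the map $|C(N)| \to X$ you ask for, and even that map fails the needed compatibility, since its restriction to $X'$ does not land in $|C(N')|$ (a point of $X' \cap U_k$ has nonzero barycentric coordinate at $n_k$). The standard ways to close the induction are to interpose the homotopy colimit of the diagram of intersections (the Mayer--Vietoris blowup), which carries canonical projections to both $X$ and $|C(N)|$ so that the induction can be run on maps rather than on bare equivalences, or to carry coherently chosen partitions of unity through the induction; either way, a construction your sketch does not contain is exactly the missing ingredient.

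Two secondary points. The gluing statement you invoke, a ladder between two pushout squares with three corner equivalences, is strictly stronger than the paper's Theorem (\ref{thmA25}), which treats a single square with a single equivalence; it is standard but must be cited or deduced separately, and Theorem (\ref{thmA24}) only yields the cofibration condition on the simplicial row. On the topological row the worry you flag is real rather than cosmetic: for an open cover, $X' \cap U_k \hookrightarrow X'$ is in general not a closed cofibration, which is precisely why every rigorous form of the Nerve Theorem carries hypotheses (a paracompact space with an open cover, or a CW complex covered by subcomplexes) that the loose statement quoted in the paper suppresses; your induction closes only after such hypotheses are made explicit and shown to be inherited by $X'$, $U_k$, and $X' \cap U_k$.
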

		
		Let $ \mathcal{N} $ be a cover of a topological space $X$ that satisfies the Nerve Theorem and $ P(\mathcal{N}) $ the power set of $ \mathcal{N} $. The relation $R$ is a subset of $ \mathcal{N} \times P(\mathcal{N})$, and if $ n \in \mathcal{N} $ and $ p \in P(\mathcal{N}) $ such that $ n \in p $ and $ \cap_{n_i \in p} n_i $ is non-empty, then $ n R p $.
		
		\begin{lemma}\label{lemRelCover}
			Let $ X $ be a topological space and $ \mathcal{N} $ a cover of $ X $ that satisfies the Nerve Theorem. Let $R \subset \mathcal{N} \times P(\mathcal{N}) $ be the relation defined above. We have that $ X $ is homotopically equivalent to $\vert K_R \vert $.
		\end{lemma}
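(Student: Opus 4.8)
The plan is to show that the Dowker complex $K_R$ is literally the nerve $C(\mathcal{N})$ of the cover, after which the conclusion follows immediately from the cited Nerve Theorem. The entire argument therefore reduces to a purely set-theoretic identification of the simplices of $K_R$ with those of $C(\mathcal{N})$; no further topology is needed beyond invoking Borsuk's version of the Nerve Theorem stated above.

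The heart of the matter is the claim that, for vertices $n_1, \ldots, n_m \in \mathcal{N}$, the simplex $[n_1, \ldots, n_m]$ lies in $K_R$ if and only if $n_1 \cap \cdots \cap n_m \neq \emptyset$. By Definition (\ref{dfnDowK}), the condition $[n_1, \ldots, n_m] \in K_R$ means there is a $p \in P(\mathcal{N})$ with $n_j R p$ for every $j$, that is, $\{n_1, \ldots, n_m\} \subseteq p$ together with $\bigcap_{n_i \in p} n_i \neq \emptyset$. For the forward direction I would use monotonicity of intersection: since $\{n_1, \ldots, n_m\} \subseteq p$, one has $\bigcap_{n_i \in p} n_i \subseteq n_1 \cap \cdots \cap n_m$, so the larger intersection being non-empty forces $n_1 \cap \cdots \cap n_m \neq \emptyset$. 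For the converse, given $n_1 \cap \cdots \cap n_m \neq \emptyset$, I would take $p = \{n_1, \ldots, n_m\}$ as the witnessing element of $P(\mathcal{N})$; then $n_j \in p$ for all $j$ and $\bigcap_{n_i \in p} n_i = n_1 \cap \cdots \cap n_m \neq \emptyset$, so each $n_j R p$ holds and the simplex belongs to $K_R$.

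This claim shows that $K_R$ and $C(\mathcal{N})$ share the same vertex set $\mathcal{N}$ and exactly the same simplices, hence $K_R = C(\mathcal{N})$. Since $\mathcal{N}$ satisfies the hypotheses of the Nerve Theorem, $\vert C(\mathcal{N}) \vert$ is homotopy equivalent to $X$, and therefore so is $\vert K_R \vert$. I do not expect a serious obstacle here; the only point requiring care is the choice of the witness $p$, where one must resist taking all of $\mathcal{N}$ (whose total intersection may well be empty) and instead choose $p$ to be precisely the vertex set of the simplex under consideration, so that the non-emptiness condition defining $R$ matches exactly the non-emptiness of $n_1 \cap \cdots \cap n_m$.
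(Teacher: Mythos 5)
Your proof is correct and follows essentially the same route as the paper's: identify $K_R$ with the nerve $C(\mathcal{N})$ and invoke Borsuk's Nerve Theorem, using $p = \{n_1, \ldots, n_m\}$ as the witness in one direction. In fact you are more complete than the paper, which dismisses the containment $K_R \subseteq C(\mathcal{N})$ as ``similar'' while your monotonicity-of-intersection argument is the right way to make that direction precise.
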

		\begin{proof}
			We use the Nerve theorem with $ \mathcal{N}$ and we obtain the complex $ C(\mathcal{N}) $ that is homotopic equivalent to $X$. We show that $ C(\mathcal{N}) = K_R $. Let $\sigma = [n_1, n_2, \ldots, n_m] \in C(\mathcal{N}) $ if and only if $ \cap_{i=1}^m n_i \neq \emptyset $. The set $ p \in P(\mathcal{N})$ is $ \{ n_1, n_2, \ldots, n_m \} $. We obtain that $ n_i R p $ for each $i $. We obtain $ \sigma \in K_R $. The proof is similar in the other direction.
		\end{proof}
		
		This idea of relation of vertices and toplexes can be extended to convex polytope complex. This is a more general class of complex than simplicial complex and cubical complex.
		
		\begin{definition}[\cite{boPolytopes}]
			A convex polytope complex $ \mathcal{C} $ is a finite collection of convex polytopes in $ \mathbb{R}^d $ such that :
			\begin{itemize}
				\item the empty polytope is in $ \mathcal{C} $,
				\item if $ P \in \mathcal{C} $, then all the faces of $ P $ are also in $ \mathcal{C} $,
				\item the intersection $ P \cap Q $ of two polytopes $ P, Q \in \mathcal{C} $ is a face both of $ P $ and $ Q $ or empty.
			\end{itemize}
		\end{definition}	
		
		We do the same construction as the simplicial case. Let $ P $ be a convex polytopal complex, $ P_0 $ be the  set of vertices and $ P_{max} $ be the set of top dimensions polytopes. We construct $ R \subset P_0 \times P_{max} $ such that $ x R p $ if $ x \in p $.	
		
		\begin{lemma}
			Let $ P $ be a convex polytopal complex and $ R $ the relation defined above. Then, $ P $ is homotopy equivalent to $\vert K_R \vert$.
		\end{lemma}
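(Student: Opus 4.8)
The plan is to realize $K_R$, up to homotopy, as the nerve of the cover of the underlying space $\vert P \vert$ by its maximal polytopes, and then pass from that nerve back to $K_R$ via Dowker's Theorem. This mirrors exactly the argument of Lemma \ref{lemRelCover}. Concretely, I would regard $\mathcal{N} = P_{max}$ as a cover of $\vert P \vert$ by closed convex polytopes, where $P_{max}$ is the set of maximal polytopes (those not properly contained in any other polytope of $\mathcal{C}$), in analogy with the toplexes of the simplicial case.

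First I would check that $\mathcal{N}$ is genuinely a cover. Since $\mathcal{C}$ is finite, the face relation makes it a finite poset, so every polytope of $P$ is a face of some maximal polytope; hence every point of $\vert P \vert$ lies in some element of $P_{max}$, and $\bigcup_{p \in P_{max}} p = \vert P \vert$. Next I would verify that $\mathcal{N}$ is a good cover. By the third axiom of a convex polytope complex, the intersection of any two polytopes of $\mathcal{C}$ is a face of both, hence again a polytope of $\mathcal{C}$ (or empty). Because faces of a polytope again lie in $\mathcal{C}$, this closes under iterated intersection: for $p_1, \ldots, p_k \in P_{max}$, the set $p_1 \cap \cdots \cap p_k$ is either empty or a convex polytope of $\mathcal{C}$, and a nonempty convex polytope is contractible. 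Thus every nonempty intersection of elements of $\mathcal{N}$ is contractible, so by the Nerve Theorem (Borsuk), $\vert C(\mathcal{N}) \vert$ is homotopy equivalent to $\vert P \vert$.

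It then remains to identify $C(\mathcal{N})$ with the Dowker complex $L_R$. A simplex $[p_1, \ldots, p_k]$ lies in $C(\mathcal{N})$ if and only if $p_1 \cap \cdots \cap p_k \neq \emptyset$. Since this intersection, when nonempty, is a nonempty polytope, it contains a vertex $x \in P_0$, and that vertex satisfies $x \in p_i$, i.e. $x R p_i$, for all $i$; conversely a common vertex forces the intersection to be nonempty. Hence $[p_1, \ldots, p_k] \in C(\mathcal{N})$ exactly when there is some $x \in P_0$ with $x R p_i$ for all $i$, which is precisely the condition defining $L_R$. Therefore $C(\mathcal{N}) = L_R$ as abstract simplicial complexes, and Dowker's Theorem (\ref{thmDowHom}) gives $\vert K_R \vert$ homotopy equivalent to $\vert L_R \vert = \vert C(\mathcal{N}) \vert$. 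Chaining the equivalences yields that $P$ is homotopy equivalent to $\vert K_R \vert$.

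The main obstacle I anticipate is the good-cover verification, specifically arguing that the multi-fold intersections (and not merely the pairwise ones guaranteed by the axiom) remain polytopes; this needs the iterated-intersection argument together with closure under taking faces. The accompanying finiteness argument, that the maximal polytopes actually cover $\vert P \vert$, is the other point to state carefully. Once these are in place, the identification $C(\mathcal{N}) = L_R$ and the appeal to Dowker are routine.
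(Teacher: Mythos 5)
Your proof is correct, and it follows the same overall strategy as the paper: cover $\vert P \vert$ by the maximal polytopes $\mathcal{N} = P_{max}$, apply Borsuk's Nerve Theorem, and identify the nerve with a Dowker complex of $R$. The one genuine difference lies in the identification step, and there your version is the more careful one. The paper claims $C(\mathcal{N}) = K_R$ outright; taken literally this cannot hold, since the vertices of $C(\mathcal{N})$ are the maximal polytopes (the set $Y = P_{max}$ of the relation), whereas by Definition (\ref{dfnDowK}) the vertices of $K_R$ lie in $X = P_0$. What the paper's computation actually establishes is $C(\mathcal{N}) = L_R$ in the sense of Definition (\ref{dfnDowL}), after which an appeal to Dowker's Theorem (\ref{thmDowHom}) is required to pass from $L_R$ to $K_R$ --- exactly the step you make explicit. (In Lemma (\ref{lemRelCover}) the analogous identification with $K_R$ \emph{is} legitimate, because there the left-hand set of the relation is the cover itself; that is presumably the source of the paper's slip.) You also supply two justifications the paper leaves implicit: that finiteness of $\mathcal{C}$ guarantees the maximal polytopes really do cover $\vert P \vert$, and that a nonempty iterated intersection $p_1 \cap \cdots \cap p_k$ of maximal polytopes is itself a polytope of $\mathcal{C}$ (by induction on the pairwise-intersection axiom together with closure under faces), hence contains a vertex lying in $P_0$. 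This last point is what makes the equivalence ``$\cap_i p_i \neq \emptyset$ if and only if the $p_i$ share a vertex of $P_0$'' valid; the paper asserts the existence of such an $x \in P_0$ without argument. In short: same route, but your write-up closes the small gaps in the paper's own proof.
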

		\begin{proof}
			Let $\mathcal{N}$ be the cover of $P$ with maximal dimension polytopes. All elements of $N$ are convexes so also contractible. We can use the Nerve Theorem and we obtain the complex $ \vert C(\mathcal{N}) \vert $ which is homotopy equivalent to $ P $. We want to show that $ C(\mathcal{N}) = K_R $.
			
			Let $ \sigma = [n_1, n_2, \ldots, n_m] \in C(\mathcal{N}) $. This implies that $ \cap_{i=1}^m n_i \neq \emptyset $. There exists at least an $ x \in P_0 $ such that $ x \in \cap_{i=1}^m n_i $ and $ x R n_i $ for all $i$. We have $ \sigma \in K_R $. The proof is similar in the other direction.
		\end{proof}		
		
		An advantage of our method is that it can be used with different data structures. If we can construct a relation that the Dowker complex is homotopically equivalent to the topological space, then we can use our reduction method.
		 			
	\section{Reduction Algorithm}\label{secRedAlgo}
	
		In this section, we present a simple algorithm by using the Theorem (\ref{thmReduceStep}) with two vertices. It takes as input a binary matrix $ M_R $ from a relation $R$ which is column irreducible. In output, we obtain a reduced relation $ R' $ in a binary matrix $ M_{R'} $. Our algorithm go as follows. We go through each vertex in the current reduced relation $ R' $. We choose a row of the matrix and compute $ A = \overline{St(\overline{St(x_i)})} $. If $ x_j \in A $ a vertex, then $ \overline{St(x_i)} \cap \overline{St(x_j)} $ is non-empty. All those vertices are in a priority queue $ pq\_Vert $. We do a simple partial ordering. If $ x_j \in \overline{St(x_i)} $ and $ x_k \notin \overline{St(x_i)} $, then we place $ x_j $ before $ x_k $. For each $ x_j \in  pq\_Vert $ such that $ i < j $, we compute $ \overline{St(x_i)} \cup \overline{St(x_j)} $ and verify if it is contractible. If it is true, then we can apply our reduction. If it is false, then we take the next vertex in $ pq\_Vert $ until there is no more already processed vertices. For the reduction step, we add a new vertex $ z $. We compute the row $ R'(z) = R'(x_i) \cup R'(x_j) $. We remove rows $ x_i $ and $ x_j $ and add the row $z$ at the end of the matrix $M_{R'}$. We apply column reduction on $M_{R'}$ because we might lose the property column irreducible after removing the row. We reduce the number of vertices by one and we select the next row and we repeat the process until all rows are processed. We use the index of rows to detect if a row is already processed. 
 		
 		We obtain the main algorithm :		
 		
 	\begin{algorithm}[H]
	\caption{Reduction of Relation} 
	\begin{algorithmic}[1]
		\REQUIRE A binary matrix $ M_R $ where $ R $ is a relation with column irreducible.
		\ENSURE A reduce matrix $M_{R'}$.
		\STATE $n_1 \leftarrow $  number of rows in $ M_R $
		\FOR{ $ 0 \leq i \leq n_1 $}
		\STATE $ pq\_Vert \leftarrow $ vertices of $ \overline{St(\overline{St(x_i)})} $ and remove all $ x_j $ such that $ j \leq i $
		\FOR{$ x_j \in pq\_Vert $ }
		\STATE $ M_{R_{St}} \leftarrow \overline{St(x_i)} \cup \overline{St(x_j)} $
		\IF{$M_{R_{St}}$ is contractible}
		\STATE Compute $ row_z $ : $R(z) \leftarrow R(x_i) \cup R(x_j)$.		
		\STATE Remove $ row_i $ and $ row_j $ from $ M_R $.
		\STATE Add $ row_z $ at the end of $ M_R $.
		\STATE $ n_1 \leftarrow n_1 - 1 $
		\STATE $ R \leftarrow $ columnReduction($R(z)$)
		\STATE Go to line $3$ with the same $i$.
 		\ENDIF 
		\ENDFOR
		\ENDFOR
		\RETURN The reduced relation $R$.
	\end{algorithmic}\label{algReduc}
	\end{algorithm}	 	

We only need to do a single pass on rows of the matrix. Even if we apply a reduction step, we do not need to come back to already processed rows in $ R $.   
	
	\begin{lemma}\label{lemNotHomPt}
		Let $ R \subset X \times Y $ be a relation. We suppose there exists $ x_i $ such that $ \vert  \overline{St(x_i)}\cup \overline{St(x_{j_1})} \vert $ is not contractible for an $ x_{j_1} \in X $. Let $ R' $ a reduced relation of $ R $ where we remove $ x_{j_1} $ and $ x_{j_2} $, and we add $z $. Then, $\vert \overline{St(x_i)} \cup \overline{St(z)} \vert$ is not contractible in $ K_{R'} $.
	\end{lemma}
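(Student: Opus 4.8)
The plan is to recognize the subcomplex $\overline{St(x_i)} \cup \overline{St(z)}$ of $K_{R'}$ as the exact output of the reduction step, Theorem (\ref{thmReduceStep}), applied not to all of $K_R$ but to the subcomplex $Q := \overline{St(x_i)} \cup \overline{St(x_{j_1})} \cup \overline{St(x_{j_2})}$ of $K_R$. In the language of the relation, $\overline{St(x_i)} \cup \overline{St(z)}$ is the restriction $K_{R'}|_{C}$ to the columns $C = R(x_i) \cup R(x_{j_1}) \cup R(x_{j_2})$, since $R'(z) = R(x_{j_1}) \cup R(x_{j_2})$ and $R'(x_i) = R(x_i)$. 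The first thing I would check is the bookkeeping identity $K_{R'}|_{C} = (K_R|_{C})''$: restricting to $C$ and then reducing by $A=\{x_{j_1},x_{j_2}\}$ produces the same relation as reducing $R$ by $A$ and then restricting to $C$, because $R(x_{j_1}), R(x_{j_2}) \subseteq C$. Thus $\overline{St(x_i)} \cup \overline{St(z)}$ is obtained from $Q$ by adding the cone $z*L$ and strong collapsing $x_{j_1}, x_{j_2}$.

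Next I would verify that Theorem (\ref{thmReduceStep}) applies to $K=Q$ with $A=\{x_{j_1},x_{j_2}\}$. The one subtlety is that the theorem uses the closed stars computed inside $Q$, not inside $K_R$; but since $Q$ contains $\overline{St(x_{j_1})}$ and $\overline{St(x_{j_2})}$ in full, these coincide, so their union is exactly $L = \overline{St(x_{j_1})} \cup \overline{St(x_{j_2})}$, which is contractible by hypothesis. Theorem (\ref{thmReduceStep}) then yields the homotopy equivalence $\overline{St(x_i)} \cup \overline{St(z)} \simeq Q$. This reduces the lemma to a statement living entirely in $K_R$: it suffices to show that $Q = \overline{St(x_i)} \cup \overline{St(x_{j_1})} \cup \overline{St(x_{j_2})}$ is not contractible, knowing that $N := \overline{St(x_i)} \cup \overline{St(x_{j_1})}$ is not contractible and that $L$ is contractible.

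The hard part will be this last step, since a priori gluing on the extra star $\overline{St(x_{j_2})}$ could fill in the homology that witnesses the non-contractibility of $N$. I would exploit that every closed star is a cone, hence contractible. Writing $Q = \overline{St(x_i)} \cup L$ and $N = \overline{St(x_i)} \cup \overline{St(x_{j_1})}$ as unions of two contractible subcomplexes glued along a common subcomplex, the inclusions are closed cofibrations by Theorem (\ref{thmA24}), so each union is a homotopy pushout of two points and we obtain $Q \simeq \Sigma\big(\overline{St(x_i)} \cap L\big)$ and $N \simeq \Sigma\big(\overline{St(x_i)} \cap \overline{St(x_{j_1})}\big)$. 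Under this reformulation, $N$ not contractible means $\overline{St(x_i)} \cap \overline{St(x_{j_1})}$ is not acyclic, and I must show $\overline{St(x_i)} \cap L$ is not acyclic either. Here the contractibility of $L$ enters through the same suspension principle applied to $L = \overline{St(x_{j_1})} \cup \overline{St(x_{j_2})}$, which forces $\overline{St(x_{j_1})} \cap \overline{St(x_{j_2})}$ to be acyclic. I would then run the Mayer--Vietoris sequence for the decomposition $\overline{St(x_i)} \cap L = \big(\overline{St(x_i)}\cap\overline{St(x_{j_1})}\big) \cup \big(\overline{St(x_i)}\cap\overline{St(x_{j_2})}\big)$, whose pairwise intersection is $\overline{St(x_i)} \cap \overline{St(x_{j_1})} \cap \overline{St(x_{j_2})}$, in order to carry the nonzero homology class of $\overline{St(x_i)} \cap \overline{St(x_{j_1})}$ into $\overline{St(x_i)} \cap L$. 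Controlling this intersection lattice of the three stars and checking that the detecting class is not annihilated is exactly where the real work lies; this is the step I expect to require the most care, and it is where the contractibility of $L$ is indispensable.
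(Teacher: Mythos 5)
Your first half is correct, and it is in fact \emph{sounder} than the paper's own argument. The bookkeeping identity holds (restricting to the columns $C$ commutes with the reduction), and since $Q = \overline{St(x_i)} \cup \overline{St(x_{j_1})} \cup \overline{St(x_{j_2})}$ contains the full closed stars of $x_{j_1}$ and $x_{j_2}$, the stars computed in $Q$ agree with those computed in $K_R$, so Theorem (\ref{thmReduceStep}) applies and gives $\overline{St(x_i)} \cup \overline{St(z)} \simeq Q$. (The paper instead strong collapses $L' = \overline{St_{K'}(x_i)} \cup \overline{St_{K'}(x_{j_1})}$ inside $K' = K \cup (z*L)$ and asserts the result is $\overline{St(x_i)} \cup \overline{St(z)}$ in $K_{R'}$; that is wrong in general, because $\overline{St_{K_{R'}}(z)}$ is the cone over \emph{all} of $L$, including simplices of $\overline{St(x_{j_2})}$ that $L'$ never contained.) So your reduction is faithful: the lemma is exactly equivalent to the claim that $Q$ is not contractible, i.e.\ that $W = W_1 \cup W_2$ is not acyclic in your notation.

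The step you flag as ``where the real work lies'' is, however, not merely delicate: it fails, and with it the lemma itself. Your hypotheses control $W_1$ (not acyclic) and $J = \overline{St(x_{j_1})} \cap \overline{St(x_{j_2})}$ (acyclic), but the Mayer--Vietoris sequence for $W = W_1 \cup W_2$ is governed by $W_1 \cap W_2 = \overline{St(x_i)} \cap J$, about which the hypotheses say nothing, and the class detecting $W_1$ can die. Concretely, let $K$ have the seven vertices $x_i, x_{j_1}, x_{j_2}, p, q, r, s$ and the eight triangles
\begin{equation*}
[p,r,x_i],\ [r,q,x_i],\ [p,s,x_{j_1}],\ [s,q,x_{j_1}],\ [p,r,x_{j_2}],\ [r,q,x_{j_2}],\ [p,s,x_{j_2}],\ [s,q,x_{j_2}]
\end{equation*}
as toplexes: a square $p,r,q,s$ coned off by the interior vertex $x_{j_2}$, with an ear $x_i$ attached along the path $[p,r],[r,q]$ and an ear $x_{j_1}$ attached along the path $[p,s],[s,q]$; $K$ is a disc. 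Then $W_1 = \overline{St(x_i)} \cap \overline{St(x_{j_1})} = \{p\} \sqcup \{q\}$, so $N \simeq \Sigma S^0 = S^1$ is not contractible, while $L = \overline{St(x_{j_1})} \cup \overline{St(x_{j_2})}$ is two discs glued along the path $[p,s],[s,q]$, hence contractible, so the reduction removing $x_{j_1}, x_{j_2}$ is legitimate. But $W_2 = \overline{St(x_i)} \cap \overline{St(x_{j_2})}$ is the path $[p,r],[r,q]$, so $W = W_1 \cup W_2$ is that path: acyclic. Correspondingly, the toplexes of $K_{R'}$ are $[p,r,x_i], [r,q,x_i], [p,s,z], [s,q,z], [p,r,z], [r,q,z]$, and $\overline{St(x_i)} \cup \overline{St(z)} = K_{R'}$ is a disc --- contractible, contradicting the conclusion of Lemma (\ref{lemNotHomPt}). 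So no amount of Mayer--Vietoris bookkeeping can close your gap: your (correct) reduction exposes that the statement is false as stated, and hence that the paper's justification for never revisiting processed rows in Algorithm \ref{algReduc} is invalid --- the algorithm still outputs a homotopy-equivalent complex, but it can miss reductions that become available after a merge.
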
	
	\begin{proof}
		By our reduction, we add $ z * ( \vert \overline{St(x_{j_1})} \cup \overline{St(x_{j_2})} \vert ) $ to $ K $ to obtain $ K' $. We consider $ \vert \overline{St(x_i)} \cup \overline{St(x_{j_1})} \vert = L'$ in $ K' $. We also have that $ z $ dominate $ x_{j_1} $ in $L'$ and also $ x_{j_2} $, if $ x_{j_2} \in L' $. We can strong collapse $ x_{j_1} $ and $ x_{j_2} $ in $L'$ to obtain $\vert \overline{St(x_i)} \cup \overline{St(z)} \vert $ in $ K_{R'} $. We have that $ L'$ in $ K' $ is not contractible. Therefore, $\vert \overline{St(x_i)} \cup \overline{St(z)} \vert$ is also not contractible.  
	\end{proof}
	
		 The previous Lemma (\ref{lemNotHomPt}) means that we do not need to come back to already process vertices after we added a new vertex. In our Algorithm 1, we used the index of rows $ x_j $ to determine that a row is already processed in $ Pq\_vert $. Therefore, we remove all $ x_j $ such that $ j \leq i $ in $ Pq\_vert $.

	For the column reduction method, we do not need to verify every column of $ R' $, but only for the columns of $ R'(z)$. 
	
	\begin{lemma}\label{lemRedCol}
		Let $ R \subset X \times Y $ be an irreducible relation and $ y_1, y_2 \in Y $. Consider $ R' $ a reduced relation of $R$ where we removed $ x_1 $ and $ x_2 $ and we added the vertex $z$. If $ y_1 \not\in R'(z) $ or $ y_2 \not\in R'(z) $, then $ R'^{-1}(y_1) \not\subset R'^{-1}(y_2)$ and $ R'^{-1}(y_1) \not\supset R'^{-1}(y_2) $. 
	\end{lemma}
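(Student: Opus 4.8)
\section*{Proof proposal for Lemma \ref{lemRedCol}}

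The plan is to reduce everything to an explicit description of the columns of $R'$ and then feed it the column irreducibility of $R$ (Definition \ref{dfnColIre}). First I would record how the inverse image of a column transforms under the reduction. For every $y \in Y$ and every retained vertex $x \in X \setminus \{x_1, x_2\}$ we have $x R' y$ iff $x R y$, while the new row satisfies $z R' y$ iff $y \in R'(z) = R(x_1) \cup R(x_2)$. Hence $R'^{-1}(y) = \bigl( R^{-1}(y) \setminus \{x_1, x_2\} \bigr) \cup Z_y$, where $Z_y = \{z\}$ if $y \in R'(z)$ and $Z_y = \emptyset$ otherwise. Since the conclusion is the conjunction of $R'^{-1}(y_1) \not\subset R'^{-1}(y_2)$ and $R'^{-1}(y_2) \not\subset R'^{-1}(y_1)$, both it and the hypothesis are symmetric in $y_1$ and $y_2$, so without loss of generality I would rename the indices so that $y_1 \notin R'(z)$.

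The key simplification is that this hypothesis pins down the column $y_1$ entirely. Indeed $y_1 \notin R(x_1) \cup R(x_2)$ forces $x_1, x_2 \notin R^{-1}(y_1)$, so deleting the two rows removes nothing from that column and $z$ is not added to it; thus $R'^{-1}(y_1) = R^{-1}(y_1)$ and $z \notin R'^{-1}(y_1)$. Now I would split on whether $y_2 \in R'(z)$. If $y_2 \notin R'(z)$ as well, the same computation gives $R'^{-1}(y_2) = R^{-1}(y_2)$, and the desired incomparability is precisely the column irreducibility of $R$ applied to the pair $(y_1, y_2)$ in both orders.

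If instead $y_2 \in R'(z)$, then $z \in R'^{-1}(y_2)$ while $z \notin R'^{-1}(y_1)$, which immediately yields $R'^{-1}(y_2) \not\subset R'^{-1}(y_1)$. For the reverse non-inclusion I would invoke column irreducibility of $R$ to choose a witness $a \in R^{-1}(y_1) \setminus R^{-1}(y_2)$. Then $a \in R'^{-1}(y_1) = R^{-1}(y_1)$, and since $a \neq z$ and $a \notin R^{-1}(y_2)$ we get $a \notin R'^{-1}(y_2)$, so $R'^{-1}(y_1) \not\subset R'^{-1}(y_2)$.

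The only place that needs care, and the one I would flag as the main obstacle, is that a witness of incomparability for $R$ could a priori be one of the deleted rows $x_1$ or $x_2$, in which case it would not survive into $R'$. The hypothesis $y_1 \notin R'(z)$ is exactly what rules this out: it guarantees $x_1, x_2 \notin R^{-1}(y_1)$, so any witness drawn from the column $y_1$ is automatically a retained vertex, and the new row $z$ conveniently supplies the witness for the direction not covered by that untouched column. Once this observation is in place the two sub-cases are routine. This is also what justifies restricting the column-reduction step of Algorithm \ref{algReduc} to the columns in which $z$ appears, since by the contrapositive a comparable pair must have both $y_1, y_2 \in R'(z)$.
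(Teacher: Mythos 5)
Your proposal is correct and follows essentially the same route as the paper's proof: reduce by symmetry to the case $y_1 \notin R'(z)$, observe that this hypothesis leaves the column of $y_1$ untouched, handle the case $y_2 \notin R'(z)$ directly by column irreducibility of $R$, and otherwise use $z$ as the witness in one direction and a surviving irreducibility witness $a \in R^{-1}(y_1) \setminus R^{-1}(y_2)$ in the other. Your write-up is in fact more careful than the paper's (which contains notational slips such as writing $R'(y_1)$ for $R'^{-1}(y_1)$ and swapping inclusion directions), and your explicit identification of why deleted rows cannot be witnesses is precisely the point the hypothesis is designed to secure.
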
 		
 	\begin{proof}
 		Suppose that $ y_1 \not\in R'(z) $. If $ y_2 \not\in R'(z) $, then it is trivial. Because $ R $ is column irreducible and we have $ R'(y_1) = R(y_1) $ and $ R'(y_2) = R(y_2) $. 
 		
		If $ y_2 \in R'(z) $, we have $ z \in R'^{-1}(y_2) $ and $ z \not\in R'^{-1}(y_1) $. This implies that $ R^{-1}(y_1) \not\subset R^{-1}(y_2) $. There exists an $ x \in R^{-1}(y_1) = R'^{-1}(y_1)$ such that $ x \not\in R^{-1}(y_2) $, because $ R $ is column irreducible. But $ x \neq z $. Therefore, $ R'^{-1}(y_1) \not\supset R'^{-1}(y_2) $
	
 	\end{proof}
 		
	There are two cases when one can remove some columns of the relation after a reduction step. 
		
	The first case is when two toplexes become the same one after a reduction. Let $ \sigma = [x_i, x_1, x_2, \ldots, x_n] $ and $ \tau = [x_j, x_1, x_2, \ldots, x_n] $ be toplexes. After the reduction step of $ x_i $ and $ x_j $ to $ z $, toplexes $ \sigma $ and $ \tau $ become $ \sigma' = [ z, x_1, x_2, \ldots, x_n ] = \tau' $. Therefore, $ R' $ is no more column irreducible, because we have a duplicate column. We remove one of the two columns. 
		
	The second case is when a toplex become a face after a reduction. Let $ \sigma = [ x_i, x_j, x_1, x_2,$ $ \ldots, x_n ] $ be a toplex. After the reduction step of $ x_i $ and $ x_j $ to $ z $, we have $ \sigma' = [ z, x_1, x_2, \ldots, x_n ] $. There is a reduction in the dimension of $ \sigma' $. There might exist another toplex $ \tau $ such that $ \tau > \sigma' $. If this is the case, we remove the column associated to the toplex $ \sigma' $. 	

	\subsection{Contractibility of $L$ and sorting of $ Pq\_vert $}	
 		
 	A main step of the algorithm is to verify that $ \overline{St(x_i)} \cup \overline{St(x_j)} $ is contractible. We use the work of \cite{strongColPers} to verify that it is strongly collapsible. This can be costly. Therefore, we hope that the shapes of the matrix associated to $ \overline{St(x_i)} \cup \overline{St(x_j)} $ are small. But even if at the beginning submatrices are small, it grows during the algorithm. But it will never be bigger than the relation matrix.
		
		Let $ \delta_{x_i} $ be the number of vertices in $ \overline{St(x_i)} $ and $ \epsilon_{x_i} $ be the number of toplexes in $ \overline{St(x_i)} $. Suppose that $ R $ is column irreducible. Therefore, the submatrix representing $ \overline{St(x_i)} $ has the shape $ \delta_{x_i} \times \epsilon_{x_i} $. We are interested in the shape of the matrix $ \overline{St(x_i)} \cup \overline{St(x_j)} $. The number of rows is $ \delta_{x_i} + \delta_{x_j} - \delta_{x_i \cap x_j} $ and the number of columns $ \epsilon_{x_i} + \epsilon_{x_j} - \epsilon_{x_i \cap x_j} $ where $ \delta_{x_i \cap x_j} $ is the number of shared vertices in $ \overline{St(x_i)} $ and $ \overline{St(x_j)} $, and $ \epsilon_{x_i \cap x_j} $ is the number of shared toplexes in $ \overline{St(x_i)} $ and $ \overline{St(x_j)} $.
		
		Now, let see how $ \delta_{x_k} $ is changed during the algorithm. Let $ \delta_{x_k}^{(l)} $ denote $ \delta_{x_k} $ after $l$ reductions. Suppose that $ \overline{St(x_i)} \cup \overline{St(x_j)} $ is contractible. We apply the reduction by removing $ x_i $ and $ x_j $, and we add a new vertex $ z $. There are 4 different cases. These equations are simple and we let the proof to the reader.
		
		\begin{equation*}
			\begin{cases}
				\delta_{x_k}^{(l+1)} = 0 & k = i,j \\
				\delta_{x_k}^{(l+1)} = \delta_{x_i}^{l)} + \delta_{x_j}^{(l)} - \delta_{x_i \cap x_j}^{(l)} & x_k = z \\
				\delta_{x_k}^{(l+1)} = \delta_{x_k}^{(l)} - 1 & \text{If } x_i, x_j \in \overline{St(x_k)} \\
				\delta_{x_k}^{(l+1)} = \delta_{x_k}^{(l)} & \text{Otherwise}
			\end{cases}.
		\end{equation*}
		
		Let see how $ \epsilon_{x_k}^{(l)} $ is affected by a reduction. We also have four cases :
		
		\begin{equation*}
			\begin{cases}
				\epsilon_{x_k}^{(l+1)} = 0 & k = i, j \\
				\epsilon_{x_k}^{(l+1)} = \epsilon_{x_i}^{(l)} + \epsilon_{x_j}^{(l)} - \epsilon_{x_i \cap x_j}^{(l)} - \# faces - \frac{\# same}{2} & x_k = z \\
				\epsilon_{x_k}^{(l+1)} = \epsilon_{x_k}^{(l)} - \# faces - \frac{\# same}{2} & \text{If } x_i, x_j \in \overline{St(x_k)} \\
				\epsilon_{x_k}^{(l+1)} = \epsilon_{x_k}^{(l)} & \text{Otherwise}
			\end{cases}.
		\end{equation*}				
		
		where $ \# faces $ is the number of toplexes which become a face and $ \# same $ is the number of toplexes which identified to the same toplex after a reduction in $ \overline{St(x_i)} \cup \overline{St(x_j)} $.
		
		In cases 1, 3 and 4 of $ \delta_{x_i}^{(l)} $, we either reduce or stay equal. But, for the second case, $ \delta_{x_k}^{(l+1)} $ can grow fast. Let $ \epsilon_{max}^{(l)} =  \max_{x_i \in X} \epsilon_{x_i}^{(l)}  $ and $ \delta_{max}^{(l)} = \max_{x_i \in X} \delta_{x_i}^{(l)} $. Therefore, the biggest matrix that we need to test at iteration $l$ has its shape bounded by $ (2 \epsilon_{max}^{(l)}) \times (2 \delta_{max}^{(l)}) $. With the equation from the case of $ \epsilon_z $ and $ \delta_z $, we obtain two bounds for each reduction step : 
		
		\begin{gather}
		 	\delta_{max}^{(l+1)} \leq \delta_{max}^{(l)} + \delta_{max}^{(l)} = 2 \delta_{max}^{(l)} \leq \text{number of rows in the matrix of } R'^{(l+1)};	\\
		 	\epsilon_{max}^{(l+1)} \leq \epsilon_{max}^{(l)} + \epsilon_{max}^{(l)} = 2 \epsilon_{max}^{(l)} \leq \text{number of columns in the matrix of } R'^{(l+1)}
		\end{gather}
		
		where $ R'^{(l+1)} $ is  the relation after applying $ l+1$ reduction steps.
		
		In summary, when the new vertex $ z $ is added, it can double the maximum number of vertices and the maximum number of toplexes in a star of a vertex.
		 
		  Consider the worst case of merging. This happens when $ \overline{St(x_i)} $ and $ \overline{St(x_j)} $ intersect at a single vertex and $ \dim(\overline{St(x_k)}) > 2 $ for $ k = i,j $. We obtain that $ \delta_z =  \delta_{x_i} + \delta_{x_j} - 1 $ and $ \epsilon_z = \epsilon_{x_i} + \epsilon_{x_j} $. If we are not careful of how we choose $ x_i $ and $ x_j $, then $ \epsilon_{max} $ and $ \delta_{max} $ can grow exponentially . For our algorithm, we choose a simple method by giving a better priority to some vertices. We fix a $ x_i \in X $. Then, we verify the condition for each $ x_j \in \overline{St(x_i)} $. If none works, then we can choose the other remaining vertices in $ x_j \in \overline{St(\overline{St(x_i)}} $. We choose to have a simple sorting of $ Pq\_vert $ because it is easier to compute. By choosing $ x_j \in \overline{St(x_i)} $, we have that that they share at least a toplex and multiple vertices. Therefore, it slows the growth of $ \delta_{\max} $ and $ \epsilon_{\max} $. A more sophisticated sort can be used to reduce even more the growth of $ \delta_{\max} $ and $ \epsilon_{\max} $. But this kind of sorting takes more time to do. This is a trade off that someone needs to choose.  
	
 	\subsection{Time complexity}	
		
		We compute the time complexity in average and the worst case of the Algorithm (\ref{algReduc}).
		
		Let $R \subset X \times Y$ be a relation,  $n = \vert X \vert  $, $ m = \vert Y \vert $, $ \epsilon_{x_i} $ the number of toplexes in $ \overline{St(x_i)} $ and $ \delta_{x_i} $ the number of vertices in $ \overline{St(x_i)} $.

		We fix the relation $ R $. We need to verify that $  \overline{St(x_i)} \cup \overline{St(x_j)} = L $ is contractible. We know that if $ \overline{St(x_i)} \cap \overline{St(x_j)} = \emptyset $, then $ L $ is not contractible. For a fixed $ x_i $, we only need to   test for $ x_j \in \overline{St(\overline{St(x_i)})} $. Let $ d_{x_i} $ be the number of vertices in $ \overline{St(\overline{St(x_i)})} $. The number of different sub-complexes $ L $ is given by $c$ :
		\begin{equation}\label{eqComp}
			c = \frac{\sum_{i=1}^n (d_{x_i} - 1)}{2}.
		\end{equation}

		Let $ c $ associated to the relation $R$ and $ c' $ associate to the reduced relation $R'$, then $ c > c' $. From Lemma (\ref{lemNotHomPt}), after a reduction step, we do not need to verify already processed vertices. The number of comparisons, we do during our Algorithm \ref{algReduc}, is bounded by $c$ from the initial simplicial complex. We have that $ c $ is $ O(n^2) $ by bounding $ d_{x_i} $ by $n$ for all $i$.

		As said earlier, to verify that $ L $ is contractible, we used the algorithm of Boissonat and al. in \cite{strongColPers}. In short, we used the Lemma (\ref{lemStrRed}) on $ R' $ and $ R'^{-1} $ until there is no more reduction. The shape of our submatrix is $ (\delta_{x_i} + \delta_{x_j}) \times (\epsilon_{x_i} + \epsilon_{x_j}) $. The average time complexity to verify that $ \vert \overline{St(x_i)} \cup \overline{St(x_j)} \vert = L $ with our notation is $ O(d \epsilon(\delta_{max} + \epsilon_{max})) $ where $d$ is the dimension of the complex $ L $ and $ \epsilon $ the maximal number of toplexes adjacent to the same vertex in $L$. 
			
		Suppose that it takes constant time to compute $ \vert \overline{St(x_i)} \cup \overline{St(x_j)} \vert $, and to add and remove rows of a matrix. For the computing and partial sorting of $ pq\_Vert $, we also suppose that it is constant. 
		
		The worst case is when the simplicial complex $K$ is the boundary of an $n$-simplex. For each $ x_i \in X $, we have $ x_j \in \overline{St(x_i)} $ for all $x_j \in X $. Therefore, $ \delta_{x_i} = n $. We also have that $ \vert \overline{St(x_i)} \cup \overline{St(x_j)} \vert = \vert K \vert $ are not contractible for all $ x_i, x_j \in X $ with $ i \neq j $. From Equation (\ref{eqComp}), the value $c$ becomes :
		
		\begin{gather*}
			c = \frac{\sum_{i=1}^n (\delta_{x_i} - 1)}{2} = \frac{\sum_{i=1}^n (n - 1)}{2} = \frac{n^2 - n}{2} = \frac{n(n-1)}{2}.
		\end{gather*}
		
		 All submatrices are the same and they are equal to $M_R$. In summary, the time complexity of the worst-case scenario is $ O(n^2n(n^2n +m^2m)) = O( n^3(n^3 + m^3)) $. The worst case is very bad. But usually, we have $ d << n$ and $ \epsilon_{x_i} << m $. 		
		
	\section{Experimental results}\label{secExpRes}
	
		In this section, we will discuss about the efficiency of the reduction of the Algorithm \ref{algReduc}. We test our algorithm on different datasets. They are Small sphere, Big sphere, Small torus, Big torus, Space Shuttle \cite{arMultiDimMorse}, Tie Fighter \cite{arMultiDimMorse}, X-wing \cite{arMultiDimMorse}, Space Station \cite{arMultiDimMorse}, Bunny \cite{webSiteStanfordBunny} and Armadillo \cite{webSiteStanfordArmadillo}. All datasets are surfaces embedded in $ \mathbb{R}^3 $.
		 
	
		Table \ref{tabExpResults} has the following columns in order : name of the dataset, the number of vertices, and the number of toplexes in the initial complex, Betti number of dimensions $0$, $1$ and $2$, the number of vertices, and the number of toplexes after applying our reduction algorithm.  
			
		Let's discuss the result at Table \ref{tabDataExp}. We see for most of the datasets that there is a huge reduction in the number of toplexes and vertices. In particular, the Armadillo was able to be reduced back to the boundary of a $3$-simplex which is smallest simplicial complex homeomorphic to a sphere. But our method is less effective in the reduction if the Betti numbers are high. This makes sense, because they have more topological features that we need to preserve. This can be seen in datasets Space Station, X-wing and Tie Fighter. Our reduction method is efficient for the Bunny, Armadillo, Sphere and Torus because they have less topological features.
		 
	\begin{table}[H]	
		\centering
		\begin{tabular}{|c|c|c|c|c|c|c|c|}
			\hline
			Dataset & Vertices & Toplexes & $ \beta_0 $ & $ \beta_1 $ & $ \beta_2 $ & Red. Vert. & Red. Top. \\
			\hline
			Small Sphere 	&  8  	& 12	& 1 	& 0 	& 1 	& 4 	& 4 	\\
			Big Sphere 		& 482 	& 960	& 1 	& 0 	& 1 	& 4 	& 4 	\\
			Small Torus 	& 16  	& 32	& 1 	& 2 	& 1 	& 7 	& 14 	\\
			Big Torus 		& 1200 	& 2400	& 1 	& 2 	& 1 	& 9 	& 18 	\\
			Space Shuttle 	& 2376 	& 3952	& 5 	& 7 	& 0 	& 17 	& 18 	\\
			Tie fighter 	& 2014 	& 3827	& 18 	& 148 	& 6 	& 159 	& 403 	\\
			X-wing 			& 3099 	& 6076	& 18 	& 50 	& 13 	& 106 	& 184 	\\ 
			Space Station 	& 5749 	& 10237	& 110 	& 116 	& 39 	& 397 	& 496 	\\ 
			Bunny  			& 34834	& 69451 & 1 	& 4 	& 0 	& 7 	& 12 	\\
			Armadillo  		& 172974 & 345944 & 1 	& 0 	& 1 	& 4 	& 4 	\\
			\hline
		\end{tabular}
		\caption{Experimental results on different datasets.}\label{tabExpResults}
	\label{tabDataExp}
	\end{table}

	\section{Conclusion}
 		
 		We have shown a new reduction method for a simplicial complex $K$ which summarize as follows. First, we choose a set of vertices $ A $ such that $ L = \cup_{x \in A } \overline{St(x)} $ is contractible. After, we added a new vertex $z$ and we add $ z * L $ to $K$ by applying the Gluing Theorem. Now, all vertices in $A$ are now dominated by $ z $. Then, we strong collapse each vertex in $A$ and obtain a new reduced simplicial complex. We argued that relations with their Dowker complexes are well suited for this reduction. Because the relation is a binary matrix and often sparse. The number of rows is the number of vertices and the number of columns is the number of toplexes. We have the sparsity of the binary matrix, if the dimension of the simplicial complex is way smaller than the number of vertices. All the operations we need to apply for the reduction are simple except one. This is to verify that $ L $ is contractible. We use the algorithm of \cite{strongColPers} to verify if $L$ is strong collapsible  because it works with our data structure of binary matrix.
 		
 		We have $ O(n^2)$ different subcomplexes $L$ to verify that it is contractible where $ n $ is the number of vertices. The number of operations to verify that $L$ is strong collapsible is $O(d\epsilon(n^2 + m^2)) $ where $ d $ is the dimension, $ \epsilon $ the maximum toplex adjacent to a vertex in $L$, $ m $ the number of toplexes in $ L $, and $ n $ the number of vertices in $L$. If a reduction step is done, we can remove some columns which the cost is $ O(\epsilon^2 d) $.
 		
 		They are multiple advantages in using this approach of reduction. Our reduction method generalizes existing methods based on strong collapse and edge collapse. Our method is memory efficient because the only data structure needed is a binary matrix which is often sparse. In general, the number of toplexes is a way smaller than the number of simplices.
 		
 		Moreover, we can construct the relation $R$ for different type data. We showed a simple way to construct a relation from vertices and toplexes. But in general, if we can use the Nerve Theorem, then we can easily construct a relation where $ \vert K_R \vert $ is homotopic to the topological space.
 		
		A major problem is to verify that $L$ is contractible. It is still a bottleneck in our computation. Even if the submatrix of $ M_{R_{St}} $ is quite small, we do them $ O(n^2) $ times.


	\typeout{}
	\bibliography{biblio}

\end{document}